\newcommand {\OM}{\Omega}
\newcommand {\GM}{\Gamma}
\title{\large \bf  CDNNs: The coupled deep neural networks for coupling of the Stokes and Darcy-Forchheimer problems
\thanks{.}}
\author[a]{Jing Yue}
\author[a]{Jian Li \thanks{Corresponding author(Jian Li) email: jianli@sust.edu.cn; jiaaanli@gmail.com.}}
\author[a]{Wen Zhang}
\affil[a]{School of Electrical and Control Engineering, School of Mathematics and Data Science, Shaanxi University of Science and Technology, Xi'an}
\date{}
\begin{document}
\maketitle
  \begin{abstract}
In this article, we present an efficient deep learning method called coupled deep neural networks (CDNNs) for coupled physical problems. Our method compiles the interface conditions of the coupled PDEs into the networks properly and can be served as an efficient alternative to the complex coupled problems. To impose energy conservation constraints, the CDNNs utilize simple fully connected layers and a custom loss function to perform the model training process as well as the physical property of the exact solution. The approach can be beneficial for the following reasons: Firstly, we sampled randomly and only input spatial coordinates without being restricted by the nature of samples. Secondly, our method is meshfree which makes it more efficient than the traditional methods. Finally, our method is parallel and can solve multiple variables independently at the same time. We give the theory to guarantee the convergence of the loss function and the convergence of the neural networks to the exact solution. Some numerical experiments are performed and discussed to demonstrate the performance of the proposed method.
  \end{abstract}

{\bf Key words:} { Scientific computing, Machine learning, the Stokes equations, Darcy-Forchheimer problems,  Beaver-Joseph-Saffman interface condition. }

\section{Introduction}
The fluid flow between porous media and free-flow zones has extensive applications in hydrology, environmental science, and biofluid dynamics. A lot of researchers derive suitable mathematical and numerical models for fluid movement. The system can be viewed as a coupled problem with two physical systems interacting across an interface. The simplest mathematical formulation for the coupled problem is coupling of the Stokes and Darcy flow with proper interface conditions. The most suitable and popular interface conditions are called Beavers-Joseph-Saffman conditions \cite{P. G. Saffman}. However, Darcy's law only provides a linear relationship between the gradient of pressure and velocity in the coupled model, which usually fails for complex physical problems. Forchheimer \cite{P. Forchheimer} conducted flow experiments in sand packs and recognized that for moderate Reynolds numbers ($Re > 0.1$ approximately), Darcy's law is not adequate. He found that the pressure gradient and Darcy velocity should satisfy the Darcy-Forchheimer law. Since the great attention has received in the coupled model, a large number of traditional methods have been devoted to the coupled Stokes and Darcy flows problems \cite{E. J. Park1, M. Y. Kim, E. J. Park2, M. Discacciati, W. J. Layton, B. Riviere1, B. Riviere2, E. Burman, G. N. Gatica, V. Girault, K. Lipnikov}. However, the difficulty of the complicated high dimensional coupled problems causes the limitation of traditional methods.

Owing to the enormous potential in approximating high-dimensional nonlinear maps \cite{N. E. Cotter, K. Hornik1, K. Hornik2, K. Hornik3, G. Cybenko,K. He2,M. Telgrasky, H.M.Q. Liao}, deep learning has attracted growing attention in many applications, such as image, speech, text recognition and scientific computing \cite{A. Krizhevsky, G. Hinton, K. He1}. Many works have arisen based on the function approximation capabilities of the feed-forward fully-connected neural network to solve initial/boundary value problems \cite{X. Li, I. E. Lagaris1, I. E. Lagaris2, K. S. McFall} in the past decades. The solution to the system of equations can be obtained by minimizing the loss function, which typically consists of the residual error of the governing Partial Differential Equations (PDEs) along with initial/boundary values. Resently, Raissi \emph{etc.}\cite{Raissi1,Raissi2,Raissi3} developed Physics Informed Neural Networks (PINNs) \cite{Yang, Rao, Olivier, Lu, Fang, Pang}. Moreover, Sirignano and Spiliopoulos proposed the Deep learning Galerkin Method \cite{J. Sirignano} for solving high dimensional PDEs. Additionally, some recent works have successfully solved the second-order linear elliptic equations and the high dimensional Stokes problems \cite{Y. Khoo, J. Li1, J. Li2, J. Li3}. Although several excellent works have been performed in applying deep learning to solve PDEs, the topic for solving complicated coupled interface problems remains to be investigated.

Considering the performance of deep learning for solving PDEs, our contribution is to design the CDNNs as an efficient alternative model for complicated coupled physical problems. We can encode any underlying physical laws naturally as prior information to obey the law of physics. To satisfy the differential operators, boundary conditions and divergence conditions, we train the neural networks on batches of randomly sampled points. The method only inputs random sampling spatial coordinates without considering the nature of samples. Notably, we take the interface conditions as the constraint for the CDNNs. The approach is parallel and solves multiple variables independently at the same time. Specially, the optimal solution can be obtained by using the appropriate optimization method instead of a linear combination of basic functions. Further more, we validate the convergence of the loss function under certain conditions and the convergence of the CDNNs to the exact solution. Several numerical experiments are conducted to investigate the performance of the CDNNs.

The article is organized as follows: Section 2 introduces the coupled model and the relation methodology. Section 3 discusses the convergence of the loss function $J(\overline{\mathbf{U}})$ and the convergence of the CDNNs to the exact solution. Section 4 reveals some numerical experiments to illustrate the efficiency of the CDNNs. The article ends with conclusion in section 5.
\section{Methodology}
Let $\Omega_S$ and $\Omega_D$ be two bounded and simply connected polygonal domains in $\mathbb{R}^2$ such that $\partial\Omega_S\cap\partial\Omega_D=\Gamma\neq\emptyset$ and $\Omega_S\cap\Omega_D=\emptyset$. Then, let $\Gamma_S:=\partial\Omega_S\setminus\Gamma, \Gamma_D:=\partial\Omega_D\setminus\Gamma$ and $\bf{n}_S$ as the unit normal vector pointing from $\Omega_S$ to $\Omega_D$, $\mathbf{n}_D$ as the unit normal vector pointing from $\Omega_D$ to $\Omega_S$, on the interface $\Gamma$ we have $\mathbf{n}_D=-\mathbf{n}_S$. In addition, $\mathbf{t}$ represents the unit tangential vector along the interface $\Gamma$. Figure \ref{Fig.1} gives a schematic representation of the geometry.

When kinematic effects surpass viscous effects in a porous medium, the Darcy velocity $\mathbf{u}_D$ and the pressure gradient $\nabla{p}_D$ does not satisfy a linear relation. Instead, a nonlinear approximation, known as the Darcy-Forchheimermodel, is considered. When it is imposed on the porous medium $\Omega_D$ with homogeneous Dircihlet boundary condition on $\Gamma_D$ the equations read:
\begin{align}
\nabla\cdot \mathbf{u}_D&=f_D,~~~in~\Omega_D,\label{SDF-1}\\
\frac{\mu}{\rho}\mathbf{K}^{-1}\mathbf{u}_D+\frac{\beta}{\rho}\mid\mathbf{u}_D\mid\mathbf{u}_D+\nabla p_D&=\mathbf{g}_D,~~~in~\Omega_D,\label{SDF-2}\\
p_D&=0, ~~~on~\partial\Omega_D\setminus\Gamma,\label{SDF-3}
\end{align}
where $\mathbf{K}$ is the permeability tensor, assumed to be uniformly positive definite and bounded, $\rho$ is the density of the fluid, $\mu$ is its viscosity and $\beta$ is a dynamic viscosity, all assumed to be positive constants. In addition, $\mathbf{g}_D$ and $f_D$ are source terms. We remark that in this context we exploit homogeneous Dirichlet boundary condition, in fact, we can also consider homogeneous Neumann boundary condition, i.e., $\mathbf{u}_D\cdot\mathbf{n}_D=0$ on $\Gamma_D$ and the arguments used in this paper are still true.

The fluid motion in $\Omega_S$ is described by the Stokes equations:
\begin{align}
-\nu\Delta\mathbf{u}_S+\nabla p_S&=\mathbf{f}_S,~~~in~\Omega_S,\label{SDF-4}\\
\nabla\cdot\mathbf{u}_S&=0, ~~~on~\Omega_S,\label{SDF-5}\\
\mathbf{u}_S&=0, ~~~on~\partial\Omega_S\setminus\Gamma,\label{SDF-6}
\end{align}
where $\nu>0$ denotes the viscosity of the fluid.

On the interface, we prescribe the following interface conditions
\begin{align}
\mathbf{u}_S\cdot\mathbf{n}_S&=\mathbf{u}_D\cdot\mathbf{n}_S,~~~on~\Gamma,\label{SDF-7}\\
p_S-\nu\mathbf{n}_S\frac{\partial\mathbf{u}_S}{\partial\mathbf{n}_S}&=p_D,~~~on~\Gamma,\label{SDF-8}\\
-\nu \mathbf{t}\frac{\partial\mathbf{u}_S}{\partial\mathbf{n}_S}&=G\mathbf{u}_S\cdot\mathbf{t}, ~~~on~\Gamma.\label{SDF-9}
\end{align}
Condition (\ref{SDF-7}) represents continuity of the fluid velocity's normal components, (\ref{SDF-8}) represents the balance of forces acting across the interface, and (\ref{SDF-9}) is the Beaver-Joseph-Saffman condition \cite{G. S. Beaver}. The constant $G>0$ is given and is usually obtained from experimental data.
\begin{figure}
  \centering
  \includegraphics[scale=0.5]{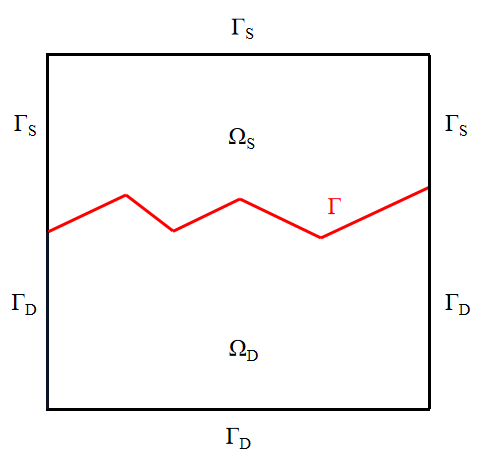}\\
  \caption{Coupled domain with interface $\Gamma$.}\label{Fig.1}
\end{figure}

For notational brevity, we set $\overline{\mathbf{u}}=(\mathbf{u}_S, \mathbf{u}_D, p_S,p_D)$ and recall the classical Sobolev spaces
$$\mathbb{X}_S^0=\{v_S\in [H^1(\OM_S)]^d: v_S|_{\GM_S}=\mathbf{0}\},$$
 $$\mathbb{Y}_D=\{q_D\in [W^{1,3/2}(\OM_D)]^2: q_D|_{\GM_D}=0\},$$
$$\mathbb{X}_S=\{v_S\in \mathbb{X}_S^0: div~ v_S=0\},$$
where
$$H^k(\Omega)=\Big\{\upsilon\in L^2(\Omega): D_w^{\alpha}\upsilon\in L^2(\Omega), \forall\alpha: \mid\alpha\mid\leq k\Big\}, $$
and their norm
$$\parallel\upsilon\parallel_k=\sqrt{(\upsilon,\upsilon)_k}=\bigg\{\sum_{\mid\alpha\mid=0}^{k}\int_{\Omega}(D_w^{\alpha}\upsilon)^{2}dx\bigg\}^{\frac{1}{2}}, ~~\parallel\upsilon\parallel_{W_{k,p}}=\Big\{\sum_{|\alpha|\leq k}\|\upsilon\|^p_{L^p}\Big\}^{1/p}.$$

Particularly,
$$\|v\|_{k}=\|v\|_{W^{k,2}}$$
where $k>0$ is a positive integer and  $\parallel\upsilon\parallel_0$ denotes the norm on $L^2(\Omega)$ or $(L^2(\Omega))^2$, $D_w^{\alpha}\upsilon$ is the generalized derivative of $\upsilon$. Moreover, $(\cdot,\cdot)_D$ represents the inner product in the domain $D$ and $<\cdot,\cdot>$  represents the inner product on the interface $\Gamma$.
\begin{figure}[ht]
  \centering
  \includegraphics[scale=0.45]{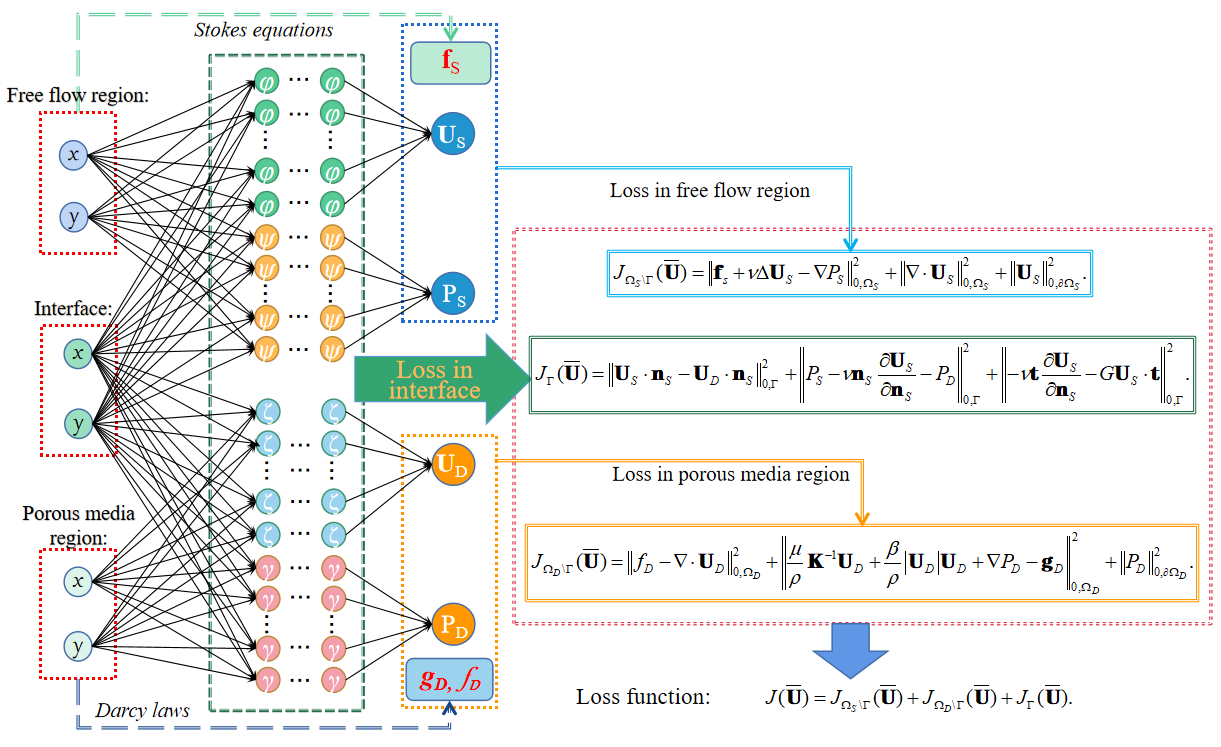}\\
  \caption{The structure of the CDNNs.}\label{nn-structure}
\end{figure}

To solve coupling of the Stokes and Darcy-Forchheimer problems, we propose the CDNNs in Figure \ref{nn-structure}. Further more, we give observations of the state variable $\overline{\mathbf{U}}(x;\theta)=\big(\mathbf{U}_S(x;\theta_{1})$, $\mathbf{U}_D(x;\theta_{2})$, $P_S(x;\theta_{3})$, $P_D(x;\theta_{4})\big)$, which is the neural network solution to the coupled Stokes and Darcy-Forchheimer problem (\ref{SDF-1})-(\ref{SDF-9}), $(\mathbf{\theta_{1}}, \mathbf{\theta_{3}})$ and $(\mathbf{\theta_{2}}, \mathbf{\theta_{4}})$ are the stacked parameters of $\theta$ for Stokes and Darcy respectively. The following constrained optimization procedure aims to reconstruct the parameters $\theta$ by minimizing the loss function
\begin{equation}\label{J}
J[\overline{\mathbf{U}}]=J_{\Omega_S\setminus\Gamma}[\overline{\mathbf{U}}]+J_{\Omega_D\setminus\Gamma}[\overline{\mathbf{U}}]+J_\Gamma[\overline{\mathbf{U}}].
\end{equation}
where
\begin{equation}\label{J123}
\begin{split}
J_{\Omega_S\setminus\Gamma}(\overline{\mathbf{U}})=&\Big\|\mathbf{f}_S+\nu\Delta\mathbf{U}_S(x;\theta_1)-\nabla P_S(x;\theta_3)\Big\|_{0, \Omega_S, \omega_{1}}^{2}\\
&+\Big\|\nabla\cdot {\mathbf{U_S}}(x;\theta_1)\Big\|_{0, \Omega_S,\omega_{1}}^{2}
+\Big\|\mathbf{U_S}(x;\theta_1)\Big\|_{0, \partial\Omega_S\setminus\Gamma, \omega_{2}}^{2}\\
J_{\Omega_D\setminus\Gamma}(\overline{\mathbf{U}})=&\Big\|f_D-\nabla\cdot \mathbf{U}_D(x;\theta_2)\Big\|_{0, \Omega_D, \omega_{1}}^{2}\\
&+\Big\|\frac{\mu}{\rho}\mathbf{K}^{-1}\mathbf{U}_D(x;\theta_2)+\frac{\beta}{\rho}\big|\mathbf{U}_D(x;\theta_2)\big|\mathbf{U}_D(x;\theta_2)
+\nabla P_D(x;\theta_4)-\mathbf{g}_D\Big\|_{0, \Omega_D,\omega_{1}}^{2}\\
&+\Big\|P_D(x;\theta_4)\Big\|_{0, \partial\Omega_D\setminus\Gamma, \omega_{2}}^{2}\\
J_{\Gamma}(\overline{\mathbf{U}})=&\Big\|\mathbf{U}_S(x;\theta_1)\cdot\mathbf{n}_S-\mathbf{U}_D(x;\theta_2)\cdot\mathbf{n}_S\Big\|_{0, \Gamma, \omega_{3}}^{2}\\
&+\Big\|P_S(x;\theta_3)-\nu\mathbf{n}_S\frac{\partial\mathbf{U}_S(x;\theta_1)}{\partial\mathbf{n}_S}-P_D(x;\theta_4)\Big\|_{0, \Gamma,\omega_{3}}^{2}\\
&+\Big\|-\nu\mathbf{t}\frac{\partial\mathbf{U}_S(x;\theta_1)}{\partial\mathbf{n}_S}-G\mathbf{U}_S(x;\theta_1)\cdot\mathbf{t}\Big\|_{0, \Gamma, \omega_{3}}^{2}\\
\end{split}
\end{equation}

The nodal values of the parameters in the input layer admitted by the deep learning model. Furthermore, it should be noted that $J(\overline{\mathbf{U}})$ can measure how well the approximate solution satisfies differential operators, divergence conditions, boundary conditions and interface conditions. Notice that
$$\big\| f(y)\big\|_{0,\mathcal{Y},\omega}=\int_{\mathcal{Y}}\big| f(y)\big|^{2}\omega(y)dy,$$
where $\omega(y)$ is the probability density of $y$ in $\mathcal{Y}$. Especially, if $J(\overline{\mathbf{U}})=0$ then $\overline{\mathbf{U}}$ is the solution to the coupled Stokes and Darcy-Forchheimer problems (\ref{SDF-1})-(\ref{SDF-9}). Due to the infeasibility to estimate $\theta$ by directly minimizing $J(\overline{\mathbf{U}})$ when integrated over a higher dimensional region, so we apply a sequence of randomly sampled points from domain instead forming mesh grid. The main steps of the CDNNs for the coupled Stokes and Darcy-Forchheimer equations are presented as \textbf{Algorithm} \ref{alg}.
\begin{algorithm}
\caption{The CDNNs for the coupled problems}\label{alg}
\KwIn{$\rho_{\omega_1}^{(n)}=\big\{ x_S^{n},x_D^{n}\big\}, \rho_{\omega_2}^{(n)}=\big\{ r_S^{n}, r_D^{n}\big\},  \rho_{\omega_3}^{(n)}=\big\{r_{\Gamma}^{n})\big\}$, max iterations $M$,  learning rate $\alpha$ }
\KwOut{$\theta_{n+1}$}
1. Randomly generated sample points $\rho^{(n)}=\big\{\rho_{\omega_1}^{(n)},  \rho_{\omega_2}^{(n)},  \rho_{\omega_3}^{(n)}\big\}$ from $(\Omega_S, \Omega_D)$, $(\partial\Omega_S\backslash\Gamma, \partial\Omega_D\backslash\Gamma)$ and $\Gamma$ by the respective probability densities $\omega_{1}$, $\omega_{2}$ and $\omega_{3}$; \\
2. Initialize the parameters  $\theta$\;
3.
\While{iterations $\leq M$}{
read current;
      \begin{align*}
    G(\rho^{(n)}, \theta^{n})&= G_S(\rho^{(n)}, \theta^{n})+ G_D(\rho^{(n)}, \theta^{n})+ G_{\Gamma}(\rho^{(n)}, \theta^{n}),
  \end{align*}\\
where
        \begin{align*}
    G_S(\rho^{(n)}, \theta^{n})&=\Big(\mathbf{f}_S+\nu\Delta\mathbf{U}_S(x_S^n;\theta_1)-\nabla P_S(x_S^n;\theta_3)\Big)^{2}\\
    &+\Big(\nabla\cdot {\mathbf{U_S}}(x_S^n;\theta_1)\Big)^{2}
+\Big(\mathbf{U_S}(r_S^n;\theta_1)\Big)^{2},\\
    G_D(\rho^{(n)}, \theta^{n})&=\Big(\frac{\mu}{\rho}K^{-1}\mathbf{U}_D(x_D^{n};\theta_2)+\frac{\beta}{\rho}\big|\mathbf{U}_D(x_D^{n};\theta_2)\big|\mathbf{U}_D(x_D^{n};\theta_2)
+\nabla P_D(x_D^{n};\theta_4)-\mathbf{g}_D\Big)^{2}\\
&+\Big(f_D-\nabla\cdot \mathbf{U}_D(x_D^{n};\theta_2)\Big)^{2}+\Big(P_D(r_D^{n};\theta_4)\Big)^{2},
  \end{align*}\\
and
        \begin{align*}
    G_{\Gamma}(\rho^{(n)}, \theta^{n})&=\Big(\mathbf{U}_S(x;\theta_1)\cdot\mathbf{n}_S-\mathbf{U}_D(x;\theta_2)\cdot\mathbf{n}_S\Big)^{2}\\
&+\Big(P_S(x;\theta_3)-\nu\mathbf{n}_S\frac{\partial\mathbf{U}_S(x;\theta_1)}{\mathbf{n}_S}-P_D(x;\theta_4)\Big)^{2}\\
&+\Big(-\nu\mathbf{t}\frac{\partial\mathbf{U}_S(x;\theta_1)}{\partial\mathbf{n}_S}-G\mathbf{U}_S(x;\theta_1)\cdot\mathbf{t}\Big)^{2},
  \end{align*}\\
 \textbf{ and}\\
   $$\theta^{n+1}=\theta_{n}-\alpha\nabla _{\theta}G(\rho^{(n)},\theta^{n}).$$\\
    \eIf{$\underset{n\rightarrow\infty}{\lim}\| \nabla_{\theta}G(\rho^{(n)},\theta_{n})\|=0$}{
        return the parameters $\theta_{n+1}$\;
    }{
        go back to the beginning of current section.\
    }
}
\end{algorithm}
Another noticeable point is that the term $\nabla_{\theta}G(\theta^{n},z^{(n)})$ is unbiased estimate of $\nabla_{\theta}J\big(\overline{\mathbf{U}}(\cdot;\theta^{n})\big)$ because the population parameters can be estimated by sample mathematical expectations.

\section{Convergence}
\vspace{0.3cm} According to the definition of loss function $J(\overline{\mathbf{U}})$, it can measure how well $\overline{\mathbf{U}}$ satisfies the equations (\ref{SDF-1})-(\ref{SDF-9}). Neural networks are a set of algorithms for classification and regression tasks inspired by the biological neural networks in brains. There have various types of neural networks with different neuron connection forms and architectures. According to the \cite{K. Hornik3}, if there is only one hidden layer and output, the set of functions implemented by following networks with $m_1, m_2, m_3$ and $m_4$ hidden units for coupling of the Stokes and Darcy-Forchheimer problems are
\begin{equation}\nonumber
\begin{split}
[\mathfrak{C}_{\mathbf{U}_S}^{\mathrm{m_1}}(\varphi)]^{d}&=\Big\{\Theta(x): \mathbb{R^{\mathrm{d}}\mapsto\mathbb{R^{\mathrm{d}}}\Big|}\Theta(x)={\sum\limits_{i=1}^{m_1}}\beta_{i}\varphi\big({\sum\limits_{j=1}^{d}}\sigma_{j,i}x_{j}+c_{i}\big)\Big\},\\
[\mathfrak{C}_{\mathbf{U}_D}^{\mathrm{m_2}}(\zeta)]^{d}&=\Big\{\Lambda(x): \mathbb{R^{\mathrm{d}}\mapsto\mathbb{R^{\mathrm{d}}}\Big|}\Lambda(x)={\sum\limits_{i=1}^{m_2}}\beta'_{i}\zeta\big({\sum\limits_{j=1}^{d}}\sigma'_{j,i}x_{j}+c'_{i}\big)\Big\},\\
\mathfrak{C}_{P_S}^{\mathrm{m_3}}(\psi)&=\Big\{\Psi(t,x): \mathbb{R^{\mathrm{d}}\mapsto\mathbb{R}\Big|}\Psi(x)={\sum\limits_{i=1}^{m_3}}\beta''_{i}\psi\big({\sum\limits_{j=1}^{d}}\sigma''_{j,i}x_{j}+c''_{i}\big)\Big\},\\
\mathfrak{C}_{P_D}^{\mathrm{m_4}}(\gamma)&=
\Big\{\Upsilon(t,x): \mathbb{R^{\mathrm{d}}\mapsto\mathbb{R}\Big|}\Upsilon(x)={\sum\limits_{i=1}^{m_4}}\beta'''_{i}\gamma\big({\sum\limits_{j=1}^{d}}\sigma'''_{j,i}x_{j}+c'''_{i}\big)\Big\},\\
\end{split}
\end{equation}
where $\Theta(x)=\big(\Theta_1(x),\Theta_2(x),\cdots ,\Theta_d(x)\big), \Lambda(x)=\big(\Lambda_1(x),\Lambda_2(x),\cdots ,\Lambda_d(x)\big)$, $\varphi$, $\zeta$, $\psi$ and $\gamma$ are the shared activation functions of the hidden units in $\mathcal{C}^2(\Omega)$, bounded and non-constant. $x_j$ is input, $\beta_i,\beta'_i,\beta''_i,\beta'''_i,\sigma_{ji},\sigma'_{ji}, \sigma''_{ji} $and $\sigma'''_{ji}$ are weights, $c_i,c'_i,c''_i$ and $c'''_i$ are thresholds of the neural networks.

More generally, we use the similar notation
$$[\mathfrak{C}_{\mathbf{U}_S}(\varphi)]^{d}\times[\mathfrak{C}_{\mathbf{U}_D}(\zeta)]^{d}\times\mathfrak{C}_{P_S}(\psi)\times\mathfrak{C}_{P_D}(\gamma)$$
for the multi layer neural networks with arbitrarily large number of hidden units $m_1, m_2, m_3$ and $m_4$. In particular, the parameters and the activation function in each dimension of $[\mathfrak{C}_{\mathbf{U}_S}^{m_1}(\varphi)]^{d}$ or $[\mathfrak{C}_{\mathbf{U}_D}^{\mathrm{m_2}}(\zeta)]^{d}$ are same as before, we set $n$ as the number of the neurons in numerical experiments. Then the parameters of the CDNNs can be formalized as follows
\begin{align*}
\theta_{1}^{k}&=(\beta_{1},\cdots,\beta_{n},\sigma_{11},\cdots,\sigma_{dn},c_{1},\cdots,c_{n}),\\
\theta_{2}^{k}&=(\beta'_{1},\cdots,\beta_{n},\sigma'_{11},\cdots,\sigma'_{dn},c'_{1},\cdots,c'_{n}),\\
\theta_{3}&=(\beta''_{1},\cdots,\beta'_{n},\sigma''_{11},\cdots,\sigma''_{dn},c''_{1},\cdots,c''_{n}),\\
\theta_{4}&=(\beta'''_{1},\cdots,\beta''_{n},\sigma'''_{11},\cdots,\sigma'''_{dn},c'''_{1},\cdots,c'''_{n}),
\end{align*}
where $k=1,2,\ldots,d, \theta_{1}\in \mathbb{R}^{(2+d)nd},  \theta_{2}\in \mathbb{R}^{(2+d)nd}, \theta_{3}\in \mathbb{R}^{(2+d)n}$ and $\theta_{4}\in \mathbb{R}^{(2+d)n}$.

In the next two subsections, we prove that the neural network $\overline{\mathbf{U}}^n$ with $n$ hidden units for $\mathbf{U}_S^n, \mathbf{U}_D^n, P_S^n$ and $ P_D^n$ satisfy the differential operators, boundary conditions, divergence conditions and interface conditions arbitrarily well for sufficiently large $n$. More importantly, we confirm that there exists $\overline{\mathbf{U}}^n\in[\mathfrak{C}_{\mathbf{U}_S}(\varphi)]^{d}\times[\mathfrak{C}_{\mathbf{U}_D}(\zeta)]^{d}\times\mathfrak{C}_{P_S}(\psi)\times\mathfrak{C}_{P_D}(\gamma)$ such that $J(\overline{\mathbf{U}}^n)\rightarrow0$ as $n\rightarrow\infty$. Another significant consideration, we give the convergence of $\overline{\mathbf{U}}^{n}\rightarrow\overline{\mathbf{u}}$ as $n\rightarrow\infty$ where $\overline{\mathbf{u}}$ is the exact solution to the coupled equations (\ref{SDF-1})-(\ref{SDF-9}).

\subsection{Convergence of the loss function $J(\overline{\mathbf{U}})$}
In this subsection, we prove the CDNNs $\overline{\mathbf{U}}$ can make the loss function $J(\overline{\mathbf{U}})$ arbitrarily small.
\newtheorem{assumption}{Assumption}[section]
\begin{assumption}\label{assumption1}
$\nabla \mathbf{v}(x),\triangle \mathbf{v}(x)$ and $\nabla q(x)$ are locally Lipschitz with Lipschitz coefficient that they have at most polynomial growth on $\mathbf{v}(x)$ and $q(x)$. Then, for some constants $0\leq q_{i}\leq\infty(i=1,2,3,4)$ we have
\begin{equation}\label{1}
|\triangle \mathbf{V}(x;\theta)-\triangle \mathbf{v}(x)|\leq(|\nabla \mathbf{V}(x;\theta)|^{q_{1}/2}+|\nabla \mathbf{v}(x)|^{q_{2}/2})|\nabla \mathbf{V}(x;\theta)-\nabla v(x)|,
\end{equation}
\begin{equation}\label{2}
|\nabla Q(x;\theta)-\nabla q(x)|\leq(| Q(x;\theta)|^{q_{3}/2}+| q(x)|^{q_{4}/2})| Q(x;\theta)-q(x)|,
\end{equation}
\begin{equation}\label{3}
|\nabla \mathbf{V}(x;\theta)-\nabla \mathbf{v}(x)|\leq(| \mathbf{V}(x;\theta)|^{q_{5}/2}+| \mathbf{v}(x)|^{q_{6}/2})| \mathbf{V}(x;\theta)-\mathbf{v}(x)|.
\end{equation}
\end{assumption}

\newtheorem{thm}{\bf Theorem}[section]
\begin{thm}\label{thm1}
Under the Assumption \ref{assumption1}, there exists a neural network $\overline{\mathbf{U}}\in[\mathfrak{C}_{\mathbf{U}_S}(\varphi)]^{d}\times[\mathfrak{C}_{\mathbf{U}_D}(\zeta)]^{d}\times\mathfrak{C}_{P_ S}(\psi)\times\mathfrak{C}_{P_D}(\gamma)$, satisfying
\begin{equation}\label{4}
J(\overline{\mathbf{U}})\leq C\epsilon^2, \ \forall \epsilon>0,
\end{equation}
where $C$ depends on the data $\{\Omega_S,\Omega_D,\Gamma, \mu, \rho, \beta, K^{-1}, \omega_{1}, \omega_{2}, \omega_{3}, f_D, \mathbf{g}_D, \mathbf{f}_S\}$.
\end{thm}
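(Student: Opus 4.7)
The plan is to exploit the fact that the exact solution $\overline{\mathbf{u}} = (\mathbf{u}_S, \mathbf{u}_D, p_S, p_D)$ makes every residual in $J$ vanish identically, and then use the Hornik-type universal approximation result cited just before the statement to construct a neural network $\overline{\mathbf{U}}$ that approximates $\overline{\mathbf{u}}$ in a sufficiently strong Sobolev norm so that each term in $J(\overline{\mathbf{U}})$ is bounded by a constant multiple of $\epsilon^{2}$. The role of Assumption~\ref{assumption1} is precisely to convert control of $\mathbf{V} - \mathbf{v}$ and $Q - q$ (and the first derivatives of $\mathbf{V} - \mathbf{v}$) into control of $\Delta \mathbf{V} - \Delta \mathbf{v}$, $\nabla Q - \nabla q$ and $\nabla \mathbf{V} - \nabla \mathbf{v}$ with only a polynomial-growth penalty.

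First I would invoke the universal approximation theorem in the spaces $[\mathfrak{C}_{\mathbf{U}_S}(\varphi)]^{d}$, $[\mathfrak{C}_{\mathbf{U}_D}(\zeta)]^{d}$, $\mathfrak{C}_{P_S}(\psi)$, $\mathfrak{C}_{P_D}(\gamma)$ (these activations are $C^{2}$, bounded and non-constant, so derivatives are approximated as well) to produce, for any $\epsilon>0$, networks $\mathbf{U}_S, \mathbf{U}_D, P_S, P_D$ such that
\begin{equation*}
\|\mathbf{U}_S-\mathbf{u}_S\|_{H^{2}(\Omega_S)} + \|\mathbf{U}_D-\mathbf{u}_D\|_{W^{1,r}(\Omega_D)} + \|P_S-p_S\|_{H^{1}(\Omega_S)} + \|P_D-p_D\|_{W^{1,3/2}(\Omega_D)} \leq \epsilon,
\end{equation*}
where $r$ is chosen large enough to dominate the cubic nonlinearity in the Forchheimer term. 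Next I would write each summand in $J(\overline{\mathbf{U}})$ as a deviation from the corresponding exact residual: e.g.\
$\mathbf{f}_S+\nu\Delta\mathbf{U}_S-\nabla P_S = \nu(\Delta\mathbf{U}_S-\Delta\mathbf{u}_S) - (\nabla P_S-\nabla p_S)$
by \eqref{SDF-4}, and analogously for the Darcy equation. The bulk terms are then bounded by inserting (\ref{1})--(\ref{3}), integrating against the probability densities $\omega_{1},\omega_{2},\omega_{3}$ and applying Hölder's inequality; the polynomial-growth terms $|\nabla\mathbf{U}_S|^{q_1/2},\ldots$ are controlled because the universal approximation bound keeps $\mathbf{U}_S$ in a bounded neighbourhood of $\mathbf{u}_S$ in the relevant norm. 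The nonlinear Forchheimer contribution $\big| |\mathbf{U}_D|\mathbf{U}_D - |\mathbf{u}_D|\mathbf{u}_D\big|$ is handled by the elementary estimate $\big||a|a-|b|b\big| \leq (|a|+|b|)|a-b|$ and again Hölder's inequality with the chosen exponent $r$.

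For the Dirichlet and interface pieces I would apply the continuity of the trace operator: $\|\mathbf{U}_S-\mathbf{u}_S\|_{L^{2}(\partial\Omega_S)}$ and $\|\partial_{\mathbf{n}_S}(\mathbf{U}_S-\mathbf{u}_S)\|_{L^{2}(\Gamma)}$ are controlled by $\|\mathbf{U}_S-\mathbf{u}_S\|_{H^{2}(\Omega_S)}$, and similarly for $P_S, \mathbf{U}_D, P_D$. Using $\mathbf{u}_S|_{\partial\Omega_S\setminus\Gamma}=0$, $p_D|_{\partial\Omega_D\setminus\Gamma}=0$ and the interface identities \eqref{SDF-7}--\eqref{SDF-9} satisfied by $\overline{\mathbf{u}}$, every boundary/interface square is bounded by $C\epsilon^{2}$ times a constant built from $\nu,G,\mathbf{n}_S,\mathbf{t}$ and the trace constants of $\Omega_S,\Omega_D$. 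Summing the bulk and trace bounds absorbs everything into $C\epsilon^{2}$, with $C$ depending only on the listed data, which is the required conclusion.

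The main obstacle I anticipate is bookkeeping the Hölder exponents so that Assumption~\ref{assumption1} can actually be applied consistently on both $\Omega_S$ (where we need $H^{2}$-type approximation for the Laplacian) and $\Omega_D$ (where the cubic Forchheimer nonlinearity forces a $W^{1,r}$-type approximation). A related subtlety is ensuring that the $C^{2}$ density results really give simultaneous approximation of a function and of its first and second derivatives, so that (\ref{1})--(\ref{3}) can be converted into pointwise control that survives integration against the sampling densities $\omega_{i}$; the rest of the argument is a fairly standard triangle-inequality/trace-theorem aggregation.
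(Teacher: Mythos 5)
Your proposal is correct and follows essentially the same route as the paper: invoke Hornik's universal approximation theorem to approximate the exact solution together with its derivatives, subtract the exact (vanishing) residuals from each term of $J$, and control the resulting differences via Assumption~\ref{assumption1}, H\"older's inequality, and the elementary splitting of the Forchheimer nonlinearity. The only cosmetic difference is that the paper states the approximation in the sup norm on $\bar{\Omega}_S$ and $\bar{\Omega}_D$ (uniform $2$-density on compacts), which makes your trace-theorem step unnecessary since the pointwise bounds already hold up to the boundary and interface.
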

\begin{proof}
From Theorem 3 of \cite{K. Hornik3}, we obtain that there exists $\overline{\mathbf{U}}\in[\mathfrak{C}_{\mathbf{U}_S}(\varphi)]^{d}\times[\mathfrak{C}_{\mathbf{U}_D}(\zeta)]^{d}\times\mathfrak{C}_{P_ S}(\psi)\times\mathfrak{C}_{P_D}(\gamma)$ which are uniformly 2-dense on compacts of $\mathcal{C} ^{2}(\bar{\Omega}_S)\times\mathcal{C} ^{2}(\bar{\Omega}_D)\times\mathcal{C} ^{1}(\bar{\Omega}_S)\times\mathcal{C} ^{1}(\bar{\Omega}_D)$. It means that for $\overline{\mathbf{u}}(x)\in \mathcal{C} ^{2}(\bar{\Omega}_S)\times\mathcal{C} ^{2}(\bar{\Omega}_D)\times\mathcal{C} ^{1}(\bar{\Omega}_S)\times\mathcal{C} ^{1}(\bar{\Omega}_D), \forall\epsilon >0$, we confirm that
\begin{equation}\label{5}
\underset{a\leq2}{max}\underset{x\in\Omega_S}{sup}| \partial_{x}^{a}\mathbf{U}_S(x; \theta_1)-\partial_{x}^{a}\mathbf{u}_S(x)|<\epsilon,
\end{equation}
\begin{equation}\label{6}
\underset{a\leq2}{max}\underset{x\in\Omega_D}{sup}| \partial_{x}^{a}\mathbf{U}_D(x; \theta_2)-\partial_{x}^{a}\mathbf{u}_D(x)|<\epsilon,
\end{equation}
\begin{equation}\label{7}
\underset{x\in\Omega_S}{sup}| P_ S(x;\theta_3)-p_ S(x)|<\epsilon.
\end{equation}
\begin{equation}\label{8}
\underset{x\in\Omega_D}{sup}| P_D(x;\theta_4)-p_D(x)|<\epsilon.
\end{equation}
Firstly, we recall the form and discuss the convergence of $J_{\Omega_S\setminus\Gamma}(\overline{\mathbf{U}})$,
\begin{equation}\label{9}
\begin{split}
J_{\Omega_S\setminus\Gamma}(\overline{\mathbf{U}})&=\Big\|\mathbf{f}_S+\nu\Delta\mathbf{U}_S(x;\theta_1)-\nabla P_ S(x;\theta_3)\Big\|_{0, \Omega_S, \omega_{1}}^{2}\\
&+\Big\|\nabla\cdot {\mathbf{U_S}}(x;\theta_1)\Big\|_{0, \Omega_S,\omega_{1}}^{2}
+\Big\|\mathbf{U_S}(x;\theta_1)\Big\|_{0, \partial\Omega_S\setminus\Gamma, \omega_{2}}^{2}\\
\end{split}
\end{equation}
According to the Assumption \ref{assumption1}, by using the \emph{H$\ddot{o}$lder} inequality and\emph{ Young} inequality, setting conjugate numbers $r_{1}$ and $r_{2}$ such that $\frac{1}{r_{1}}+\frac{1}{r_{2}}=1$, it follows that
\begin{equation}
\begin{aligned}\label{10}
&\int_{\Omega_S}|\triangle \mathbf{U}(x;\theta_1)-\triangle \mathbf{u}(x)|^{2}d\omega_{1}(x) \\
\leq  &\int_{\Omega_S}\Big(|\nabla \mathbf{U}(x;\theta_1)|^{l_{1}}+|\nabla \mathbf{u}(x)|^{l_{2}}\Big)\Big(\nabla \mathbf{U}(x;\theta_1)-\nabla \mathbf{u}(x)\Big)^{2}d\omega_{1}(x)\\
\leq  &\Big[\int_{\Omega_S}\Big(|\nabla \mathbf{U}(x;\theta_1)|^{l_{1}}+|\nabla \mathbf{u}(x)|^{l_{2}}\Big)^{r_{1}}d\omega_{1}(x)\Big]^{1/r_{1}}\\
&\times\Big[\int_{\Omega_S}\Big(\nabla \mathbf{U}(x;\theta_1)-\nabla \mathbf{u}(x)\Big)^{2r_{2}}d\omega_{1}(x)\Big]^{1/r_{2}} \\
\leq  &\Big[\int_{\Omega_S}\Big(|\nabla \mathbf{U}(x;\theta_1)-\nabla \mathbf{u}(x)|^{l_{1}}+|\nabla \mathbf{u}(x)|^{l_{1}\vee l_{2}}\Big)^{r_{1}}d\omega_{1}(x)\Big]^{1/r_{1}}\\
&\times\Big[\int_{\Omega_S}\Big(\nabla \mathbf{U}(x;\theta_1)-\nabla \mathbf{u}(x)\Big)^{2r_{2}}d\omega_{1}(x)\Big]^{1/r_{2}}\\
\leq  &C \Big(\epsilon^{l_1}+\underset{{\Omega}_S}{sup}| \nabla \mathbf{u}(x) |^{l_1\vee l_2})\Big)\epsilon^{2} ,
\end{aligned}
\end{equation}
here we set $l_{1}\vee l_{2}=max\{l_{1}, l_{2}\}$.
In the same way,
\begin{equation}
\begin{aligned}\label{11}
&\int_{\Omega_S}\Big\vert\nabla P_ S(x;\theta_3)-\nabla p_ S(x)\Big\vert^{2}d\omega_{1}(x) \\
\leq  &\int_{\Omega_S}\Big(| P_ S(x;\theta_3)|^{l_{3}}+| p_ S(x)|^{l_{4}}\Big)\Big(P_ S(x;\theta_3)-p_ S(x)\Big)^{2}d\omega_{1}(x)\\
\leq  &\Big[\int_{\Omega_S}\Big(| P_ S(x;\theta_3)|^{l_{3}}+| p_ S|^{l_{4}}\Big)^{r_{3}}d\omega_{1}(x)\Big]^{1/r_{3}}\times\Big[\int_{\Omega_S}
\Big(P_ S(x;\theta_3)-p_ S(x)\Big)^{2r_{4}}d\omega_{1}(x)\Big]^{1/r_{4}}\\
\leq  &\Big[\int_{\Omega_S}\Big(| P_ S(x;\theta_3)-p_ S(x)|^{l_{3}}+| p_ S(x)|^{l_{3}\vee l_{4}}\Big)^{r_{3}}d\omega_{1}(x)\Big]^{1/r_{3}}\times\Big[\int_{\Omega_S}
\Big(P_ S(x;\theta_3)-p_ S(x)\Big)^{2r_{4}}d\omega_{1}(x)\Big]^{1/r_{4}}\\
\leq &C \Big(\epsilon^{l_3}+\underset{{\Omega}_S}{sup}|  p_ S(x) |^{l_3\vee l_4})\Big)\epsilon^{2},
\end{aligned}
\end{equation}
where $\frac{1}{r_{3}}+\frac{1}{r_{4}}=1$ and $l_{3}\vee l_{4}=max\{l_{3}, l_{4}\}$.

For the boundary condition, we have
\begin{align}\label{12}
&\int_{\partial\Omega_S\setminus\Gamma}| U_S(x;\theta_1)-u_S(x)|^{2}d\omega_{2}(x)\leq C\epsilon^{2}.
\end{align}
Owing to (\ref{10}) - (\ref{12}), we can conclude that
\begin{equation}
\begin{aligned}\label{13}
J_{\Omega_S\setminus\Gamma}(\overline{\mathbf{U}})=&\Big\|\mathbf{f}_S+\nu\Delta\mathbf{U}_S(x;\theta_1)-\nabla P_ S(x;\theta_3)\Big\|_{0, \Omega_S, \omega_{1}}^{2}\\
&+\Big\|\nabla\cdot {\mathbf{U_S}}(x;\theta_1)\Big\|_{0, \Omega_S,\omega_{1}}^{2}
+\Big\|\mathbf{U_S}(x;\theta_1)\Big\|_{0, \partial\Omega_S\setminus\Gamma, \omega_{2}}^{2}\\
\leq&\nu\big\|\Delta\mathbf{U}_S(x;\theta_1)-\Delta\mathbf{u}_S(x)\big\|_{0,\Omega_S,\omega_{1}}^{2}+\big\|\nabla {P_ S}(x;\theta_3)-\nabla p_ S(x)\big\|_{0,\Omega_S,\omega_{1}}^{2}\\
&+\big\|\nabla\cdot {\mathbf{U}_S}(x;\theta_1)\big\|_{0,\Omega_S,\omega_{1}}^{2}+\big\|\mathbf{U}(x;\theta_1)\big\|_{0,\partial\Omega_S\setminus\Gamma,\omega_{2}}^{2}\\
\leq&\nu\int_{\Omega_S}|\triangle \mathbf{U}_S(x;\theta_1)-\triangle \mathbf{u}_S(x)|^{2}d\omega_{1}(x)+\int_{\Omega_S}|\nabla P_ S(x;\theta_3)-\nabla p_ S(x)|^{2}d\omega_{1}(x)\\
&+\int_{\Omega_S}|\nabla\cdot \mathbf{u}_S(x)|^{2}d\omega_{1}(x)+\int_{\Omega_S}\Big|\nabla\cdot\Big(\mathbf{U}_S(x;\theta_1)-\mathbf{u}_S(x)\Big)\Big|^{2}d\omega_{1}(x)\\
&+\int_{\partial\Omega_S\setminus\Gamma}| \mathbf{U}_S(x;\theta_1)-\mathbf{u}_S(x)|^{2}d\omega_{2}(x)\\
\leq  &C\epsilon^{2}.
\end{aligned}
\end{equation}

Next, we remain to prove the convergence of $J_{\Omega_D\setminus\Gamma}(\overline{\mathbf{U}})$ and $J_{\Gamma}(\overline{\mathbf{U}})$. We know that
\begin{equation}\label{14}
\begin{split}
J_{\Omega_D\setminus\Gamma}(\overline{\mathbf{U}})=&\Big\|f_D-\nabla\cdot \mathbf{U}_D(x;\theta_2)\Big\|_{0, \Omega_D, \omega_{1}}^{2}+\Big\|P_D(x;\theta_4)\Big\|_{0, \partial\Omega_D\setminus\Gamma, \omega_{2}}^{2}\\
&+\Big\|\frac{\mu}{\rho}K^{-1}\mathbf{U}_D(x;\theta_2)+\frac{\beta}{\rho}\big|\mathbf{U}_D(x;\theta_2)\big|\mathbf{U}_D(x;\theta_2)
+\nabla P_D(x;\theta_4)-\mathbf{g}_D\Big\|_{0, \Omega_D,\omega_{1}}^{2}.
\end{split}
\end{equation}
From (\ref{8}), we have
\begin{equation}\label{15}
\begin{aligned}
&\int_{\Omega_D}\Big\vert\nabla P_D(x;\theta_4)-\nabla p_D(x)\Big\vert^{2}d\omega_{1}(x) \\
\leq  &\int_{\Omega_D}\Big(| P_D(x;\theta_4)|^{l_{5}}+| p_D(x)|^{l_{6}}\Big)\Big(P_D(x;\theta_4)-p_D(x)\Big)^{2}d\omega_{1}(x),
\end{aligned}
\end{equation}
which can be updated by using the H$\ddot{o}$lder inequality and Young inequality, thus we have
\begin{equation}\label{15-2}
\begin{aligned}
&\Big[\int_{\Omega_D}\Big(| P_D(x;\theta_4)|^{l_{5}}+| p_D|^{l_{6}}\Big)^{r_{5}}d\omega_{1}(x)\Big]^{1/r_{5}}\\
&\times\Big[\int_{\Omega_D}
(P_D(x;\theta_4)-p_D(x))|^{2r_{6}}d\omega_{1}(x)\Big]^{1/r_{6}}\\
\leq  &\Big[\int_{\Omega_D}\Big(| P_D(x;\theta_4)-p_D(x)|^{l_{5}}+| p_D(x)|^{l_{5}\vee l_{6}}\Big)^{r_{5}}d\omega_{1}(x)\Big]^{1/r_{5}}\\
&\times\Big[\int_{\Omega_D}
\Big(P_D(x;\theta_4)-p_D(x)\Big)^{2r_{6}}d\omega_{1}(x)\Big]^{1/r_{6}}\\
\leq &C \Big(\epsilon^{l_5}+\underset{{\Omega}_D}{sup}|  p_D(x) |^{l_5\vee l_6})\Big)\epsilon^{2},
\end{aligned}
\end{equation}
where $\frac{1}{r_{5}}+\frac{1}{r_{6}}=1$ and $l_{5}\vee l_{6}=max\{l_{5}, l_{6}\}$.

Next we prove the boundedness of term $|\mathbf{U}_D(x;\theta_2)\big|\mathbf{U}_D(x;\theta_2)$,
\begin{equation}\label{16-1}
\begin{split}
&\int_{\Omega_D}\Big(|\mathbf{U}_D(x;\theta_2)\big|\mathbf{U}_D(x;\theta_2)-|\mathbf{u}_D(x)\big|\mathbf{u}_D(x)\Big)^{2}d\omega_{1}(x)\\
=&\int_{\Omega_D}\Big(\mathbf{U}_D(x;\theta_2)\Big(|\mathbf{U}_D(x;\theta_2)|-|\mathbf{u}_D(x)|\Big)+|\mathbf{u}_D(x)|\Big(\mathbf{U}_D(x;\theta_2)-\mathbf{u}_D(x)\Big)\Big)^2d\omega_{1}(x)\\
=&\int_{\Omega_D}\Big(\mathbf{U}_D(x;\theta_2)\Big(|\mathbf{U}_D(x;\theta_2)|-|\mathbf{u}_D(x)|\Big)\Big)^2d\omega_{1}(x)+\int_{\Omega_D}\Big(|\mathbf{u}_D(x)|\Big(\mathbf{U}_D(x;\theta_2)-\mathbf{u}_D(x)\Big)\Big)^2d\omega_{1}(x)\\
&+2\int_{\Omega_D}\Big(|\mathbf{u}_D(x)|\mathbf{U}_D(x;\theta_2)\Big(|\mathbf{U}_D(x;\theta_2)|-|\mathbf{u}_D(x)|\Big)\Big(\mathbf{U}_D(x;\theta_2)-\mathbf{u}_D(x)\Big)d\omega_{1}(x),
\end{split}
\end{equation}
where
\begin{equation}\label{16-2}
\begin{aligned}
&\int_{\Omega_D}\Big(\mathbf{U}_D(x;\theta_2)\Big(|\mathbf{U}_D(x;\theta_2)|-|\mathbf{u}_D(x)|\Big)\Big)^2d\omega_{1}(x)\\
\leq&\Big[\int_{\Omega_D}\Big(\mathbf{U}_D(x;\theta_2)\Big)^{2r_7}d\omega_{1}(x)\Big]^{1/{r_7}}\times\Big[\int_{\Omega_D}\Big(|\mathbf{U}_D(x;\theta_2)|+|\mathbf{u}_D(x)|\Big)\Big)^{2r_8}d\omega_{1}(x)\Big]^{r_8}\\
\leq&\Big[\int_{\Omega_D}\Big(\Big(\mathbf{U}_D(x;\theta_2)-\mathbf{u}_D(x)\Big)+\mathbf{u}_D(x)\Big)^{2r_7}d\omega_{1}(x)\Big]^{1/{r_7}}\\
&\times\Big[\int_{\Omega_D}\Big(|\mathbf{U}_D(x;\theta_2)-\mathbf{u}_D(x)|+|\mathbf{u}_D(x)|\Big)^{2r_8}d\omega_{1}(x)\Big]^{r_8},
\end{aligned}
\end{equation}
by using the \emph{H$\ddot{o}$lder} inequality and \emph{Young} inequality, setting conjugate numbers $r_{7}$ and $r_{8}$ such that $\frac{1}{r_{7}}+\frac{1}{r_{8}}=1$.

Similarly,  we can obtain
\begin{equation}\label{16-3}
\begin{aligned}
&\int_{\Omega_D}\Big(|\mathbf{u}_D(x)|\Big(\mathbf{U}_D(x;\theta_2)-\mathbf{u}_D(x)\Big)\Big)^2d\omega_{1}(x)\\
\leq&[\int_{\Omega_D}\Big(|\mathbf{u}_D(x)|)^{r_9}]^{1/r_9}\times[\int_{\Omega_D}\Big(\mathbf{U}_D(x;\theta_2)-\mathbf{u}_D(x)\Big)\Big)^{2r_{10}}]^{1/r_{10}}d\omega_{1}(x),
\end{aligned}
\end{equation}
here $\frac{1}{r_{9}}+\frac{1}{r_{10}}=1$. Furthermore, we set $\frac{1}{r_{11}}+\frac{1}{r_{12}}=1$ and $ \frac{1}{r_{13}}+\frac{1}{r_{14}}=1,$
\begin{equation}\label{16-4}
\begin{aligned}
&\int_{\Omega_D}|\mathbf{u}_D(x)|\mathbf{U}_D(x;\theta_2)\Big(|\mathbf{U}_D(x;\theta_2)|-|\mathbf{u}_D(x)|\Big)\Big(\mathbf{U}_D(x;\theta_2)-\mathbf{u}_D(x)\Big)d\omega_{1}(x)\\
\leq&\Big[\int_{\Omega_D}\Big(\mathbf{U}_D(x;\theta_2)\Big(|\mathbf{U}_D(x;\theta_2)|-|\mathbf{u}_D(x)|\Big)\Big)^{r_{11}}d\omega_{1}(x)\Big]^{1/r_{11}}\\
&\times\Big[\int_{\Omega_D}\Big(|\mathbf{u}_D(x)|\Big(\mathbf{U}_D(x;\theta_2)-\mathbf{u}_D(x)\Big)\Big)^{r_{12}}d\omega_{1}(x)\Big]^{1/r_{12}}\\
\leq&\Big[\int_{\Omega_D}\Big(\Big(\mathbf{U}_D(x;\theta_2)-\mathbf{u}_D(x)\Big)+\mathbf{u}_D(x)\Big)^{r_{11}r_{13}}]^{1/r_{11}r_{13}}\\
&\times\Big[\int_{\Omega_D}\Big(|\mathbf{U}_D(x;\theta_2)-\mathbf{u}_D(x)|+|\mathbf{u}_D(x)|\Big)^{r_{11}r_{13}}d\omega_{1}(x)\Big]^{1/r_{11}r_{13}}\\
&\times\Big[\int_{\Omega_D}|\mathbf{u}_D(x)|^{r_{12}r_{14}}d\omega_{1}(x)\Big]^{1/r_{12}r_{14}}\times\Big[\int_{\Omega_D}\Big(\mathbf{U}_D(x;\theta_2)-\mathbf{u}_D(x)\Big)^{r_{12}}d\omega_{1}(x)\Big]^{1/r_{12}}.
\end{aligned}
\end{equation}
According to the inequalities (\ref{16-1}) - (\ref{16-4}), we conclude
\begin{equation}\label{16}
\begin{aligned}
&\int_{\Omega_D}\Big(|\mathbf{U}_D(x;\theta_2)\big|\mathbf{U}_D(x;\theta_2)-|\mathbf{u}_D(x)\big|\mathbf{u}_D(x)\Big)^{2}d\omega_{1}(x)\\
\leq &(\epsilon^{2}+\underset{{\Omega}_D}{sup}|  \mathbf{u}_D(x) |^{2})^2+\underset{{\Omega}_D}{sup}|  \mathbf{u}_D(x) |\epsilon^{2}+2\epsilon\underset{{\Omega}_D}{sup}|  \mathbf{u}_D(x) |(\epsilon+\underset{{\Omega}_D}{sup}|  \mathbf{u}_D(x) |)^2.
\end{aligned}
\end{equation}
For the boundary condition, we know
\begin{align}\label{17}
&\int_{\partial\Omega_D\setminus\Gamma}| \mathbf{U}_D(x;\theta_2)-\mathbf{u}_D(x)|^{2}d\omega_{2}(x)\leq C\epsilon^{2}.
\end{align}
Combining the equations (\ref{15}) - (\ref{17}), we obtain
\begin{equation}
\begin{aligned}\label{18}
J_{\Omega_D\setminus\Gamma}(\overline{\mathbf{U}})=&\Big\|f_D-\nabla\cdot \mathbf{U}_D(x;\theta_2)\Big\|_{0, \Omega_D, \omega_{1}}^{2}+\Big\|P_D(x;\theta_4)\Big\|_{0, \partial\Omega_D\setminus\Gamma, \omega_{2}}^{2}\\
&+\Big\|\frac{\mu}{\rho}K^{-1}\mathbf{U}_D(x;\theta_2)+\frac{\beta}{\rho}\big|\mathbf{U}_D(x;\theta_2)\big|\mathbf{U}_D(x;\theta_2)
+\nabla P_D(x;\theta_4)-\mathbf{g}_D\Big\|_{0, \Omega_D,\omega_{1}}^{2}\\
\leq&\int_{\Omega_D}\Big|\nabla\cdot\Big(\mathbf{U}_D(x;\theta_2)-\mathbf{u}_D(x)\Big)\Big|^{2}d\omega_{1}(x)+\int_{\partial\Omega_D\setminus\Gamma}\Big(P_D(x;\theta_4)-p_D(x)\Big)^{2}d\omega_{2}(x)\\
&+\frac{\beta}{\rho}\int_{\Omega_D}\Big(\big|\mathbf{U}_D(x;\theta_2)\big|\mathbf{U}_D(x;\theta_2)-\big|\mathbf{u}_D(x)\big|\mathbf{u}_D(x)\Big)d\omega_{1}(x)\\
&+\frac{\mu}{\rho}K^{-1}\int_{\Omega_D}\Big(\mathbf{U}_D(x;\theta_2)-\mathbf{u}_D(x)\Big)^{2}d\omega_{1}(x)\\
&+\int_{\Omega_D}\Big(\nabla\mathbf{P}_D(x;\theta_4)-\nabla\mathbf{p}_D(x)\Big)^{2}d\omega_{1}(x)\\
\leq  &C\epsilon^{2}.
\end{aligned}
\end{equation}
The loss in interface is referred in  (\ref{J123}). According to the Assumption \ref{assumption1}, we know that
\begin{equation}
\begin{aligned}\label{19}
&\int_{\Gamma}\Big\vert\nabla \mathbf{U}_D(x;\theta_2)-\nabla \mathbf{u}_D(x)\Big\vert^{2}d\omega_{3}(x) \\
\leq  &\int_{\Gamma}\Big(| \mathbf{U}_D(x;\theta_2)|^{l_{7}}+| \mathbf{u}_D(x)|^{l_{8}}\Big)\Big(\mathbf{U}_D(x;\theta_2)-\mathbf{u}_D(x)\Big)^{2}d\omega_{3}(x),
\end{aligned}
\end{equation}
which can be updated by using the \emph{H$\ddot{o}$lder} inequality and \emph{Young} inequality, thus we have
\begin{equation}
\begin{aligned}\label{19-1}
&\Big[\int_{\Gamma}\Big(| \mathbf{U}_D(x;\theta_3)|^{l_{7}}+| \mathbf{u}_D|^{l_{8}}\Big)^{r_{15}}d\omega_{3}(x)\Big]^{1/r_{15}}\\
&\times\Big[\int_{\Gamma}
(\mathbf{U}_D(x;\theta_3)-\mathbf{u}_D(x))|^{2r_{16}}d\omega_{3}(x)\Big]^{1/r_{16}}\\
\leq  &\Big[\int_{\Gamma}\Big(| \mathbf{U}_D(x;\theta_3)-\mathbf{u}_D(x)|^{l_{7}}+| u_D(x)|^{l_{7}\vee l_{8}}\Big)^{r_{15}}d\omega_{3}(x)\Big]^{1/r_{15}}\\
&\times\Big[\int_{\Gamma}
\Big(\mathbf{U}_D(x;\theta_3)-\mathbf{u}_D(x)\Big)^{2r_{16}}d\omega_{3}(x)\Big]^{1/r_{16}}\\
\leq &C \Big(\epsilon^{l_7}+\underset{\Gamma}{sup}|  \mathbf{u}_D(x) |^{l_7\vee l_8})\Big)\epsilon^{2},
\end{aligned}
\end{equation}
where $\frac{1}{r_{15}}+\frac{1}{r_{16}}=1$ and $l_{7}\vee l_{8}=max\{l_{7}, l_{8}\}$.

Above all, we can obtain
\begin{equation}
\begin{split}
J_{\Gamma}(\overline{\mathbf{U}})=&\Big\|\mathbf{U}_S(x;\theta_1)\cdot\mathbf{n}_S-\mathbf{U}_D(x;\theta_2)\cdot\mathbf{n}_S\Big\|_{0, \Gamma, \omega_{3}}^{2}+\Big\|P_ S(x;\theta_3)-\nu\mathbf{n}_S\frac{\partial\mathbf{U}_S(x;\theta_1)}{\partial\mathbf{n}_S}-P_D(x;\theta_4)\Big\|_{0, \Gamma,\omega_{3}}^{2}\\
&+\Big\|-\nu\mathbf{t}\frac{\partial\mathbf{U}_S(x;\theta_1)}{\partial\mathbf{n}_S}-G\mathbf{U}_S(x;\theta_1)\cdot\mathbf{t}\Big\|_{0, \Gamma, \omega_{3}}^{2}\\
\leq&\big\|\mathbf{U}_S(x;\theta_2)-\mathbf{u}_S(x)\big\|_{0,\Gamma,\omega_{3}}^{2}\Big\|\mathbf{n}_S\Big\|_{0, \Gamma, \omega_{3}}^{2}+\big\|\mathbf{U}_D(x;\theta_2)-\mathbf{u}_D(x)\big\|_{0,\Gamma,\omega_{3}}^{2}\Big\|\mathbf{n}_S\Big\|_{0, \Gamma, \omega_{3}}^{2}\\
&+\big\|P_ S(x;\theta_3)-p_ S(x)\big\|_{0,\Gamma,\omega_{3}}^{2}+\big\|P_D(x;\theta_4)-p_D(x)\big\|_{0,\Gamma,\omega_{3}}^{2}\\
&+\nu\Big\|\mathbf{n}_S\Big\|_{0, \Gamma, \omega_{3}}^{4}\big\|\nabla\mathbf{U}_S(x;\theta_1)-\nabla\mathbf{u}_S(x;\theta_1)\big\|_{0,\Gamma,\omega_{3}}^{2}+G\Big\|\mathbf{t}\Big\|_{0, \Gamma, \omega_{3}}^{2}\big\|\mathbf{U}_S(x;\theta_1)- \mathbf{u}_S(x)\big\|_{0,\Gamma,\omega_{3}}^{2}\\
&+\nu\Big\|\mathbf{t}\Big\|_{0, \Gamma, \omega_{3}}^{2}\big\|\nabla\mathbf{U}_S(x;\theta_1)-\nabla\mathbf{u}_S(x;\theta_1)\big\|_{0,\Gamma,\omega_{3}}^{2}\Big\|\mathbf{n}_S\Big\|_{0, \Gamma, \omega_{3}}^{2}\\
\leq  &C\epsilon^{2},
\end{split}
\end{equation}
which completes the proof.
\end{proof}

\subsection{Convergence of the CDNNs to the exact solution}
In the last subsection, we have proved the convergence of the loss function. In this subsection, we remain to discuss the convergence of the CDNNs to the exact solution. According to the Galerkin method, the neural networks satisfy
\begin{align}
\nabla\cdot \mathbf{U}^n_D-f_D&=0,~~~in~\Omega_D,\label{SDFW-1}\\
\frac{\mu}{\rho}K^{-1}\mathbf{U}^n_D+\frac{\beta}{\rho}\mid\mathbf{U}^n_D\mid\mathbf{U}^n_D+\nabla P^n_D-\mathbf{g}_D&=0,~~~in~\Omega_D,\label{SDFW-2}\\
P^n_D&=0, ~~~on~\partial\Omega_D\setminus\Gamma,\label{SDFW-3}\\
-\nu\Delta\mathbf{U}^n_S+\nabla P^n_ S-\mathbf{f}_S&=0,~~~in~\Omega_S,\label{SDFW-4}\\
\nabla\cdot\mathbf{U}^n_S&=0, ~~~on~\Omega_S,\label{SDFW-5}\\
\mathbf{U}^n_S&=0, ~~~on~\partial\Omega_S\setminus\Gamma,\label{SDFW-6}\\
\mathbf{U}^n_S\cdot\mathbf{n}_S-\mathbf{U}^n_D\cdot\mathbf{n}_S&=0,~~~on~\Gamma,\label{SDFW-7}\\
P^n_ S-\nu\mathbf{n}_S\frac{\partial\mathbf{U}^n_S}{\partial\mathbf{n}_S}-P^n_D&=0,~~~on~\Gamma,\label{SDFW-8}\\
-\nu \mathbf{t}\frac{\partial\mathbf{U}^n_S}{\partial\mathbf{n}_S}-G\mathbf{U}^n_S\cdot\mathbf{t}&=0, ~~~on~\Gamma.\label{SDFW-9}
\end{align}

Based on the above system of equations, we give the following assumption and theorem to guarantee the convergency of the CDNNs to the exact solution.
\begin{assumption}\label{assumption2}
We assume $(\mathbf{u}_S,\mathbf{u}_D)\in C^{\xi}(\bar{\Omega}_S)\times C^{\xi}(\bar{\Omega}_D)$ where $\xi >2$ with itself and its first derivative bounded in $\bar{\Omega}_S\times\bar{\Omega}_D.$ Moreover, for every $n\in N$, $\mathbf{U}_S^n\times \mathbf{U}_D^n\in C^{1,2}(\bar{\Omega}_S)\cap \mathbb{X}_S\times C^{1,2}(\bar{\Omega}_D)\cap L^3(\OM_D)^2$. We assume that the subspace $\mathbb{X}_S^n\times L^3(\OM_D)^2\times L^2(\OM_S)\times\mathbb{Y}_D^n\subset [\mathfrak{C}_{\mathbf{U}_S}(\varphi)]^{d}\times[\mathfrak{C}_{\mathbf{U}_D}(\zeta)]^{d}\times\mathfrak{C}_{P_S}(\psi)\times\mathfrak{C}_{P_D}(\gamma)$ satisfies the discrete \emph{inf-sup} condition.
\end{assumption}
\begin{thm}\label{thm2}
Under the Assumption \ref{assumption1} and Theorem \ref{thm1}, the neural network ${\mathbf{U}_S^{n}}$ can converge strongly to $\mathbf{u}_S$ in $L^{2}$,  ${P_ S^{n}},~{\mathbf{U}_D^{n}}$ and ${P_D^{n}}$ can converge strongly to $p_ S, \mathbf{u}_D$ and $p_D$ in $H^{-1}$. In addition, if the sequences  $\{\mathbf{U}_S^{n}\}_{n\in \mathbb{N}}, \{P_ S^{n}\}_{n\in \mathbb{N}}, \{\mathbf{U}_D^{n}\}_{n\in \mathbb{N}}$ and $ \{P_D^{n}\}_{n\in \mathbb{N}}$ are uniformly bounded and equicontinuous in $\Omega_S$ and $\Omega_D$, they can converge to $\mathbf{u}_S,~ p_ S,\mathbf{ u}_D$ and $p_D$ respectively.
\end{thm}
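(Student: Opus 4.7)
The proof plan is to subtract the neural-network system (\ref{SDFW-1})--(\ref{SDFW-9}) from the exact coupled problem (\ref{SDF-1})--(\ref{SDF-9}) and carry out an energy estimate in the spirit of Galerkin error analysis, with the right-hand side driven by the residuals rather than by the approximation error of a finite element basis. First I would define, for each of the nine governing relations, the pointwise residual produced by inserting $\overline{\mathbf{U}}^n$ into the original operator, for instance $R_1^n:=\nabla\cdot\mathbf{U}_D^n-f_D$, $R_4^n:=-\nu\Delta\mathbf{U}_S^n+\nabla P_S^n-\mathbf{f}_S$, and the analogous boundary and interface residuals $R_6^n,\dots,R_9^n$. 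By Theorem \ref{thm1} combined with the definition of $J$ in (\ref{J123}), every such residual tends to zero in its natural weighted $L^2$ norm as $n\to\infty$.

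Next I would introduce the error quantities
\begin{equation*}
e_S^n=\mathbf{U}_S^n-\mathbf{u}_S,\quad E_S^n=P_S^n-p_S,\quad e_D^n=\mathbf{U}_D^n-\mathbf{u}_D,\quad E_D^n=P_D^n-p_D,
\end{equation*}
which satisfy a linear-plus-Forchheimer system whose data are exactly the residuals $R_i^n$. Testing the Stokes error momentum against $e_S^n$ and the Darcy--Forchheimer error momentum against $e_D^n$, integrating by parts and using (\ref{SDF-7})--(\ref{SDF-9}) so that the coupling boundary integrals cancel modulo the interface residuals, I would arrive at an energy inequality of the form
\begin{equation*}
\nu\|\nabla e_S^n\|_{0,\Omega_S}^2+\frac{\mu}{\rho}(\mathbf{K}^{-1}e_D^n,e_D^n)_{\Omega_D}+\frac{\beta}{\rho}(|\mathbf{U}_D^n|\mathbf{U}_D^n-|\mathbf{u}_D|\mathbf{u}_D,e_D^n)_{\Omega_D}+G\|e_S^n\cdot\mathbf{t}\|_{0,\Gamma}^2\le C\sum_{i=1}^{9}\|R_i^n\|_{0,\cdot}^2.
\end{equation*}
The Forchheimer term is handled by the monotonicity identity $(|\mathbf{a}|\mathbf{a}-|\mathbf{b}|\mathbf{b})\cdot(\mathbf{a}-\mathbf{b})\ge 0$, which keeps the left-hand side coercive in the nonlinear regime. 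The discrete inf-sup condition of Assumption \ref{assumption2} then transfers control of the velocity errors into control of $E_S^n$ and $E_D^n$ in dual norms, giving $H^{-1}$ convergence of the pressures and of $\mathbf{u}_D$, while $\mathbf{U}_S^n\to\mathbf{u}_S$ strongly in $L^2$ via Poincar\'e on $\mathbb{X}_S^0$ and the compact embedding $H^1\hookrightarrow L^2$.

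For the final statement, uniform boundedness and equicontinuity of the four networks on $\bar{\Omega}_S$ and $\bar{\Omega}_D$ give relative compactness in $C(\bar{\Omega}_S)\times C(\bar{\Omega}_D)$ by Arzel\`a--Ascoli; any uniform limit along a subsequence must agree with the limit already identified in the weak sense, and a standard subsequence-of-subsequence argument promotes this to convergence of the full sequences. The main obstacle I expect is setting up the cancellation of the interface and boundary terms correctly while simultaneously preserving coercivity from the Forchheimer cross term: the test functions have to be chosen so that the traces produced by integration by parts on the Stokes side match the normal-trace contributions from the Darcy--Forchheimer side modulo the interface residuals $R_7^n,R_8^n,R_9^n$, and this matching must be done without breaking the $L^3$ structure required by the cubic nonlinearity $|\mathbf{v}|\mathbf{v}$. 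Once that bookkeeping is in place, the remaining estimates (Cauchy--Schwarz, Young, Poincar\'e, inf-sup) are routine.
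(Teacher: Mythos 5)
Your route is genuinely different from the paper's. The paper does not form error equations at all: it writes the weak formulation of the network system (\ref{SDFW-1})--(\ref{SDFW-9}), tests with $\mathbf{U}^n_S, P^n_S, \mathbf{U}^n_D, P^n_D$ themselves, uses the coercivity of the Forchheimer operator $A(\mathbf{V})=\frac{\mu}{\rho}K^{-1}\mathbf{V}+\frac{\beta}{\rho}|\mathbf{V}|\mathbf{V}$ and the discrete inf-sup condition to get \emph{uniform a priori bounds} ($\mathbf{U}^n_S$ in $H^1$, $\mathbf{U}^n_D$ in $L^2\cap L^3$, pressures in $L^2$) in terms of the data, and then extracts weakly convergent subsequences and invokes the compact embeddings $H^1\hookrightarrow L^2$ and $L^2\hookrightarrow H^{-1}$, finishing with Arzel\`a--Ascoli. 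Your plan of subtracting the two systems, testing with the errors, and exploiting the monotonicity $(|\mathbf{a}|\mathbf{a}-|\mathbf{b}|\mathbf{b})\cdot(\mathbf{a}-\mathbf{b})\ge 0$ is in principle stronger: it would identify the limit as the exact solution quantitatively in terms of the residuals, something the paper's compactness argument quietly skips (the paper never actually proves that the weak limit of the subsequence \emph{is} $\overline{\mathbf{u}}$).

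However, your energy estimate has a concrete gap as written. First, $e_S^n=\mathbf{U}_S^n-\mathbf{u}_S$ is not divergence-free: $\nabla\cdot e_S^n=\nabla\cdot\mathbf{U}_S^n=R_5^n$, so testing the momentum error equation with $e_S^n$ leaves the term $(E_S^n,\nabla\cdot e_S^n)_{\Omega_S}$ on the left, which cannot be absorbed into $C\sum_i\|R_i^n\|^2$ without a bound on $\|E_S^n\|_0$ --- but that bound is exactly what you propose to obtain \emph{afterwards} from the inf-sup condition. This circularity must be broken, e.g.\ by first establishing uniform stability bounds (as the paper does) or by projecting onto discretely divergence-free test functions. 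Second, the boundary and interface residuals $R_6^n,\dots,R_9^n$ are controlled by $J$ only in (weighted) $L^2$ of the traces, whereas the boundary terms produced by integration by parts pair these traces against the conormal data $\nu\partial_{\mathbf{n}_S}\mathbf{U}_S^n-P_S^n\mathbf{n}_S$ on $\Gamma_S$ and $\Gamma$; bounding those pairings needs $H^{1/2}$--$H^{-1/2}$ duality or trace estimates on the normal derivatives that $J\to 0$ does not supply. (The paper sidesteps both issues by assuming the network satisfies (\ref{SDFW-1})--(\ref{SDFW-9}) exactly, i.e.\ all residuals vanish.) Your final Arzel\`a--Ascoli step and the subsequence-of-subsequences argument are fine and match the paper's.
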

\begin{proof}

Firstly, we give the weak formulation for (\ref{SDFW-1})-(\ref{SDFW-9}). Multiplying (\ref{SDFW-4}) by $\mathbf{V}^n_ S \in  \mathbb{X}_S\bigcap[\mathfrak{C}_{\mathbf{U}_S}(\varphi)]^{d}$ and (\ref{SDFW-5}) by $Q^n_ S\in \mathfrak{C}_{P_ S}(\psi)$ and integration by parts yields
\begin{align}
\nu(\nabla \mathbf{U}^n_ S, \nabla \mathbf{V}^n_S)_{\Omega_S} -\nu <\nabla \mathbf{U}^n_ S\cdot \mathbf{n}_S, \mathbf{V}^n_ S>_{\GM}-(P^n_ S, \nabla \cdot \mathbf{V}^n_ S)_{\Omega_S}+<P^n_ S, \mathbf{V}^n_ S\cdot \mathbf{n}_S>_{\GM}&=(\mathbf{f}_S,\mathbf{V}^n_ S)_{\Omega_S},\label{20}\\
(\nabla \cdot\mathbf{U}^n_S, Q^n_ S)_{\OM_S}&=0.\label{21}
\end{align}

Multiplying (\ref{SDFW-1}) by $Q^n_ D \in  \mathbb{Y}_D\bigcap\mathfrak{C}_{P_D}(\gamma)$ and (\ref{SDFW-2}) by $\mathbf{V}^n_ D \in  [\mathfrak{C}_{\mathbf{U}_D}(\zeta)]^{d}$, it then follows from integration by parts that
\begin{align}
-(\mathbf{U}^n_ D, \nabla \mathbf{Q}^n_D)_{\Omega_D} + <\mathbf{U}^n_D\cdot \mathbf{n}_D, \mathbf{Q}^n_D>_{\GM}&=(f_D,\mathbf{Q}^n_D)_{\Omega_D},\label{22}\\
\frac{\mu}{\rho}(K^{-1}\mathbf{U}^n_D,\mathbf{V}^n_D)_{\Omega_D}+\frac{\beta}{\rho}(|\mathbf{U}^n_D|\mathbf{U}^n_D,\mathbf{V}^n_D)_{\Omega_D}-(P^n_D,\nabla\cdot\mathbf{V}^n_D)_{\OM_D}+<P^n_D, \mathbf{V}^n_D\cdot \mathbf{n}_D>_{\GM}&=(\mathbf{g}_D,\mathbf{V}^n_D)_{\OM_D}.\label{23}
\end{align}
Considering the interface conditions, simple algebraic calculation yields
$$<\nabla \mathbf{U}^n_ S\mathbf{n}_S, \mathbf{V}^n_S>_{\GM}=<\mathbf{n}_S\nabla \mathbf{U}^n_ S\cdot\mathbf{n}_S, \mathbf{V}^n_S\cdot\mathbf{n}_S>_{\GM}+<\mathbf{n}_S\nabla \mathbf{U}^n_ S\cdot\mathbf{t}, \mathbf{V}^n_S\cdot\mathbf{t}>_{\GM}$$
which gives by employing interface conditions (\ref{SDFW-8}) and (\ref{SDFW-9})
\begin{align}
<(P^n_ SI-\nu\nabla\mathbf{U}^n_ S)\mathbf{n}_S,\mathbf{V}^n_ S>_{\GM}=<P^n_D,\mathbf{V}^n_S\cdot\mathbf{n}_S>_{\GM}+G<\mathbf{U}^n_S\cdot\mathbf{t},\mathbf{V}^n_S\cdot\mathbf{t}>_{\GM}.\label{4647}
\end{align}

For convenience of presentation, we introduce the nonlinear operator $A : L^3(\OM_ D)^2 \rightarrow L^{3/2}(\OM_ D)^2$ defined by
\begin{align}\label{24}
A(\mathbf{V})=\frac{\mu K^{-1}}{\rho}\mathbf{V}+\frac{\beta}{\rho}|\mathbf{V}|\mathbf{V}.
\end{align}

The definition of (\ref{24}) gives
$$(A(\mathbf{U}^n_D),\mathbf{U}^n_D)_{\OM_D}\geq C(\|\mathbf{U}^n_D\|^2_{0}+\|\mathbf{U}^n_D\|^3_{L^3}).$$


According to the Assumption \ref{assumption2} and (\ref{23}),
\begin{align*}
\|P^n_D\|_{0}&\leq C\sup_{\textbf{V}^n_D\in L^3(\OM_D)^2\bigcap[\mathfrak{C}_{\textbf{U}_D}(\zeta)]^{d}}\frac{(A(\textbf{U}^n_D),\textbf{V}^n_D)_{\OM_D}-(g_D,\textbf{V}^n_D)_{\OM_D}}{\|\textbf{V}^n_D\|_{L^3(\OM_D)}}\\
&\leq C(\|\mathbf{U}^n_D\|_{0}+\|\textbf{U}^n_D\|^2_{L^3}+\|\mathbf{g}_D\|_{0}).
\end{align*}

Taking $\mathbf{V}^n_S = \mathbf{U}^n_S, Q^n_ S=P^n_ S, \mathbf{V}^n_ D = \mathbf{U}^n_ D$ and $Q^n_ D = P^n_D$ in (\ref{20})-(\ref{23}) and adding the resulting equations (\ref{4647}) yields
\begin{align*}
&\nu(\nabla \mathbf{U}^n_ S, \nabla \mathbf{U}^n_S)_{\Omega_S}+G<\mathbf{U}^n_S\cdot\mathbf{t},\mathbf{U}^n_S\cdot\mathbf{t}>_{\GM}
+\frac{1}{2}(A(\mathbf{U}^n_D),\mathbf{U}^n_D)_{\OM_D}\\
&= (\mathbf{f}_S,\mathbf{U}^n_ S)_{\Omega_S}+(f_D,P^n_D)_{\Omega_D}+\frac{1}{2}(\mathbf{g}_D,\mathbf{U}^n_D)_{\OM_D}.
\end{align*}

According to the definition of the $H^1$ norm, the \emph{Young} inequality and the \emph{$Poincar\acute{e}$} inequality, we can obtain
\begin{align*}
&\|\mathbf{U}^n_ S\|_{1}^2+\|\mathbf{U}^n_D\|^2_{0}+\|\mathbf{U}^n_D\|^3_{L^3}\\
\leq& C(\|\mathbf{f}_S\|_{0}\|\mathbf{U}^n_ S\|_{0}+\|f_D\|_{0}\|P^n_D\|_{0}+\|\mathbf{g}_D\|_{0}\|\mathbf{U}^n_D\|_{0})\\
\leq& C(\|\mathbf{f}_S\|_{0}\|\mathbf{U}^n_ S\|_{0}+\|f_D\|_{0}(\|\mathbf{U}^n_D\|_{0}+\|\mathbf{U}^n_D\|^2_{L^3}+\|\mathbf{g}_D\|_{0})+\|\mathbf{g}_D\|_{0}\|\mathbf{U}^n_D\|_{0})\\
\leq& C(\|f_D\|^{2}_{0}+\|f_D\|^{3}_{0}+\|\mathbf{f}_S\|^{2}_{0}+\|\mathbf{g}_D\|^{2}_{0}).
\end{align*}

%
%

Thus we have
\begin{align*}
\|P^n_D\|_{0}&\leq C(\|\mathbf{U}^n_D\|_{0}+\|\mathbf{U}_D\|^2_{L^3}+\|\mathbf{g}_D\|_{0})\\
&\leq C(\|f_D\|_{0}+\|f_D\|^{2}_{0}+\|\mathbf{f}_S\|_{0}+\|\mathbf{f}_S\|^{2}_{0}+\|\mathbf{g}_D\|_{0}+\|\mathbf{g}_D\|^{2}_{0}).
\end{align*}

According to the Assumption \ref{assumption2}, (\ref{20}) - (\ref{21}) and  (\ref{4647})
\begin{align*}
\|P^n_ S\|_{0}&\leq C\sup_{\mathbf{V}^n_S\in [\mathbb{X}_S]^2\bigcap[\mathfrak{C}_{\mathbf{U}_S}(\varphi)]^{d}}\frac{(\mathbf{f}_S,\mathbf{V}^n_S)-(\nabla \mathbf{U}^n_S, \nabla\mathbf{V}^n_S)-<P^n_D,\mathbf{V}^n_S\cdot \mathbf{n}_S>_{\GM}-G<\mathbf{U}^n_S\cdot \mathbf{t},\mathbf{V}^n_S\cdot \mathbf{t}>_{\GM}}{\|\mathbf{V}^n_S\|_{0}}\\
&\leq C(\|\textbf{f}_S\|_{0}+\|\nabla \mathbf{U}^n_S\|_{0}+\|P^n_D\|_{0}+\|\mathbf{U}^n_ S\|_{0})\\
&\leq C(\|f_D\|_{0}+\|f_D\|^{2}_{0}+\|\mathbf{f}_S\|_{0}+\|\mathbf{f}_S\|^{2}_{0}+\|\mathbf{g}_D\|_{0}+\|\mathbf{g}_D\|^{2}_{0}).
\end{align*}

Up to now, we obtain
 \begin{equation}\label{u1}
\{\mathbf{U}_S^{n}\}_{n\in N}~~is~~uniformly~~bounded~~in~~ H^1(\Omega_S).
\end{equation}

 By using the uniformly boundedness of $\mathbf{U}_S^{n}$, we can extract a subsequence $\{\mathbf{U}_S^{n}\}_{n\in\mathbb{N}}$ of $\mathbf{U}_S^{n}$ which converge weakly in $H^{1}(\Omega_S)$. Due to the compact embedding $H^{1}(\Omega_S)\hookrightarrow L^{2}(\Omega_S)$,
we have $\underset{n\rightarrow\infty}{\lim}\parallel \mathbf{U}_S^{n}-\mathbf{u}_S\parallel_{0,\Omega_S}=0$.

Similarly, we know $L^3(\OM_D)\subset L^2(\OM_D)$, which implies that
  \begin{equation}\label{u1}
\{\mathbf{U}_D^{n}\}_{n\in N}~~is~~uniformly~~bounded~~in~~ L^2(\Omega_D).
\end{equation}
Due to the compact embedding $L^{2}(\Omega_D)\hookrightarrow H^{-1}(\Omega_D)$, we have $\underset{n\rightarrow\infty}{\lim}\parallel \mathbf{U}_D^{n}-\mathbf{u}_D\parallel_{-1,\Omega_D}=0$.

The convergence of the $P_ S^n$ and $P_D^n$ as follows
$$\underset{n\rightarrow\infty}{\lim}\parallel P_ S^{n}-P_ S\parallel_{-1,\Omega_S}=0,~~~\underset{n\rightarrow\infty}{\lim}\parallel P_D^{n}-p_D\parallel_{-1,\Omega_D}=0.$$

For all these reasons, $\{\mathbf{U}_S^{n}\}_{n\in \mathbb{N}}$ converge strongly to $\mathbf{u}_S$ in $L^{2}$,  $\{P_ S^{n}\}_{n\in \mathbb{N}}, \{U_D^{n}\}_{n\in \mathbb{N}}$ and $ \{P_D^{n}\}_{n\in \mathbb{N}}$ converge strongly to $p_ S, \mathbf{u}_D$ and $p_D$ in $H^{-1}$. More generally, by the well known Arzel$\grave{a}$-Ascoli theorem we can conclude that $\{\mathbf{U}_S^{n}\}_{n\in \mathbb{N}}, \{P_ S^{n}\}_{n\in \mathbb{N}},\{\mathbf{U}_D^{n}\}_{n\in \mathbb{N}}$ and $\{P_D^{n}\}_{n\in \mathbb{N}}$ converge uniformly to $\mathbf{u}_S, p_ S, \mathbf{u}_D$ and $p_D$ respectively.
\end{proof}
\section{Numerical Experiments}
The section presents several numerical tests to confirm the proposed theoretical results. We start with three examples with known exact solution to test the efficiency of the proposed method, where the permeability for the third example is highly oscillatory. Then, the fourth example with no exact solution shows the application of the proposed method to high contrast permeability problem. This section concludes with a physical flow. The numerical examples presented below could violate the interface conditions (\ref{SDF-8}) and (\ref{SDF-9}) \cite{Zhao}, that is, (\ref{SDF-8}) and (\ref{SDF-9}) are replaced by
\begin{align}
p_S-\nu\mathbf{n}_S\frac{\partial\mathbf{u}_S}{\partial\mathbf{n}_S}=p_D+g_1,~~~on~\Gamma, \label{newcon1}\\
-\nu \mathbf{t}\frac{\partial\mathbf{u}_S}{\partial\mathbf{n}_S}=G\mathbf{u}_S\cdot\mathbf{t}+g_2, ~~~on~\Gamma, \label{newcon2}
\end{align}
to deal with this case, the variational formulation has only a small change: The equation \ref{4647}  now includes the two terms  $- <g_1, \mathbf{V}_S \cdot  \mathbf{n}_S>_ \Gamma   -<g_2, \mathbf{V}_S \cdot  \mathbf{t}>_\Gamma$  on the right side. In addition, we utilize 16 neurons in each hidden layer and apply the relative $L^1$ error $(errL^1:  \frac{\|\mathbf{r}-\mathbf{R}\|_{L^1}}{\|\mathbf{r}\|_{L^1}})$ and relative $L^2$ error $(errL^2:  \frac{\|\mathbf{r}-\mathbf{R}\|_{0}}{\|\mathbf{r}\|_{0}})$  to reflect the accuracy between the results of the CDNNs and the exact solution ($\mathbf{r}$: the exact solution; $\mathbf{R}$: the neural network).

\subsection{Test 1}
In this subsection we study the performance of the CDNNs for the benchmark problem presented in \cite{Zhao}. This problem is defined for $\Omega_S = (0, 1)^2,~\Omega_D = (0, 1)\times (1, 2)$ and the interface $\Gamma  = \{ 0 < x < 1,~ y = 1\}$ as

$$\mathbf{u}_S = \left(
                   \begin{array}{c}
                    x^2\pi sin(2\pi y)(x-1)^2 \\
                    -2x sin(y\pi)^2(2x-1)(x-1) \\
                   \end{array}
                 \right)
,~~p_S =(cos(1)-1)sin(1)+cos(y)sin(x)$$
and
$$\mathbf{u}_D = \left(
                   \begin{array}{c}
                     sin(\pi x)sin(\pi y)\\
                    -2xsin(y\pi)^2(2x-1)\\
                   \end{array}
                 \right)
,~~p_D =sin(\pi x)cos(\pi y). $$
Similar to \cite{Zhao}, we fix $\mathbf{K}$ to be the identity tensor in $\mathbb{R}^{ 2\times 2},~\mu  = \rho  = \beta  = \nu  = 1$. Due to the interface conditions (\ref{SDF-8}) and (\ref{SDF-9}) are violated, we exploit the interface conditions (\ref{newcon1}) and (\ref{newcon2}), where $g_1$ and $g_2$ can be computed by the exact solution. Specifically, the errors converge as the hidden layer increases in Figure \ref{figure3a}. Figure \ref{figure3b} reveals that the change of data has no significant influence on errors once the size is larger than $10^2$. In particular, Figures \ref{figure4} - \ref{figure5} and Table \ref{tabel2} show the details of the results, this is consistent with our theory.

\begin{figure}
\centering
\subfigure[400 training points]{
\begin{minipage}[t]{0.9\linewidth}\label{figure3a}
\centering
\includegraphics[width=3.5in]{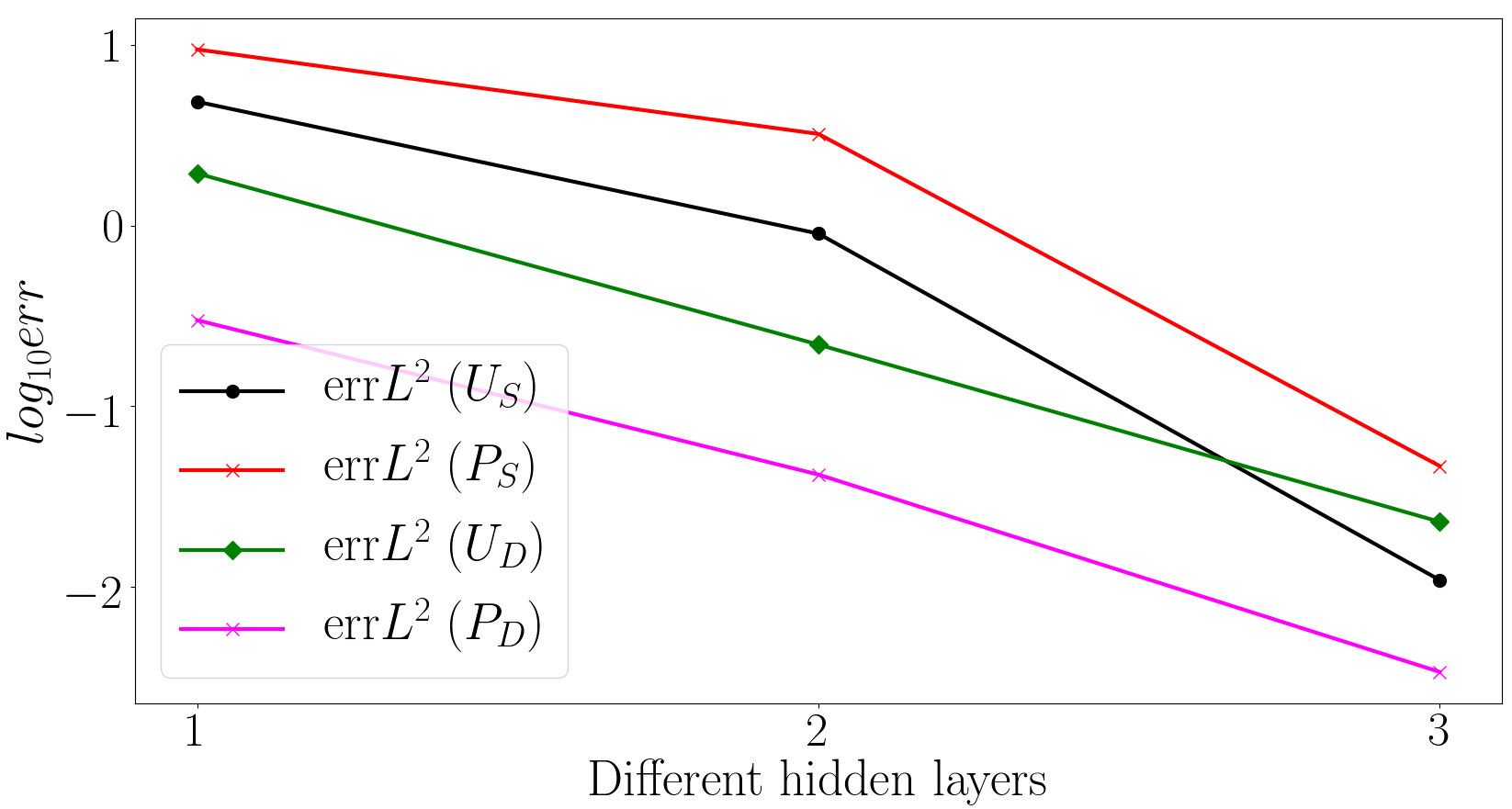}
\end{minipage}}\\
\subfigure[one hidden layer]{
\begin{minipage}[t]{0.9\linewidth}\label{figure3b}
\centering
\includegraphics[width=3.5in]{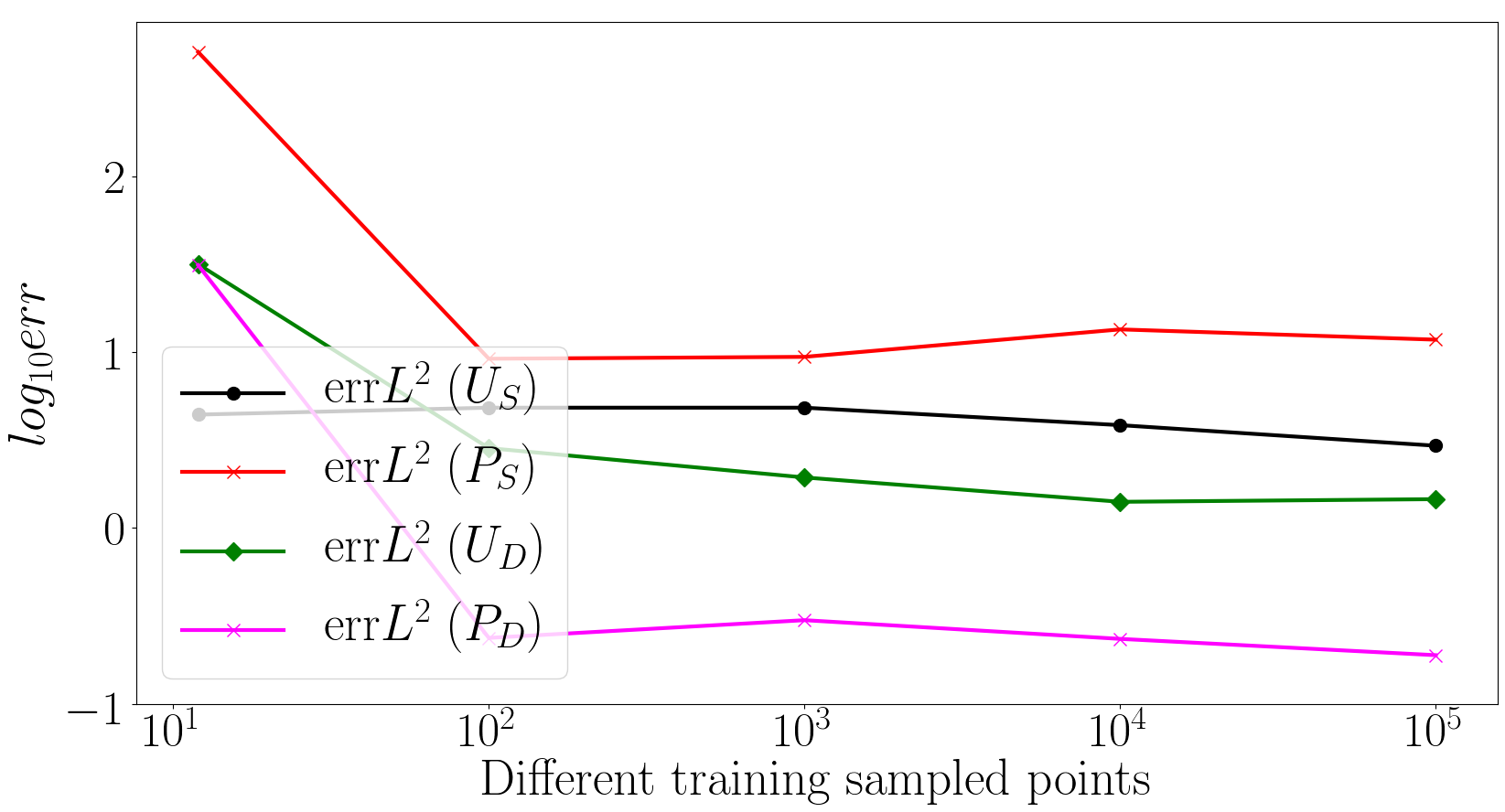}
\end{minipage}}
\caption{The influence of different hidden layers and different training data on err$L^2$ (Test 1).}

\end{figure}

\begin{table}
\centering
\caption{The relative errors of Test 1.}
\begin{tabular}{c@{\extracolsep{0.2em}}c@{\extracolsep{0.2em}}c@{\extracolsep{0.2em}}c@{\extracolsep{0.2em}}c@{\extracolsep{0.2em}}c}
\hline
   & &400  sampled points          \\ \hline
1 layer    &$\mathbf{U}_S$          &$P_S$             &$\mathbf{U}_D$             &$P_D$   \\ \hline
$errL^1$    \ \ \  &$2.49\times10^{0}$   \ \ \ &$9.15\times10^{0}$   \ \ \    &$8.72\times10^{0}$    \ \ \    &$2.74\times10^{-1}$  \\
$errL^2$    \ \ \  &$4.84\times10^{0}$  \ \ \ &$9.42\times10^{0}$    \ \ \  &$1.94\times10^{0}$      \ \ \    &$2.99\times10^{-1}$ \\\hline\hline
2 layers    &$\mathbf{U}_S$          &$P_S$             &$\mathbf{U}_D$             &$P_D$      \\ \hline
$errL^1$   \ \ \  &$4.85\times10^{-1}$   \ \ \ &$3.59\times10^{0}$   \ \ \    &$9.62\times10^{-2}$     \ \ \    &$4.03\times10^{-2}$ \\
$errL^2$    \ \ \  &$9.01\times10^{-1}$  \ \ \ &$3.21\times10^{0}$    \ \ \  &$2.19\times10^{-1}$      \ \ \    &$4.18\times10^{-2}$ \\\hline\hline
3 layers   &$\mathbf{U}_S$          &$P_S$             &$\mathbf{U}_D$             &$P_D$    \\ \hline
$errL^1$    \ \ \  &$5.80\times10^{-3}$   \ \ \ &$5.26\times10^{-2}$   \ \ \    &$1.01\times10^{-2}$     \ \ \    &$3.25\times10^{-3}$ \\
$errL^2$    \ \ \  &$1.09\times10^{-2}$  \ \ \ &$4.66\times10^{-2}$    \ \ \  &$2.29\times10^{-2}$     \ \ \    &$3.38\times10^{-3}$  \\\hline\hline\
\end{tabular}
\label{tabel2}
\end{table}

\begin{figure}
\centering
\includegraphics[width=6in]{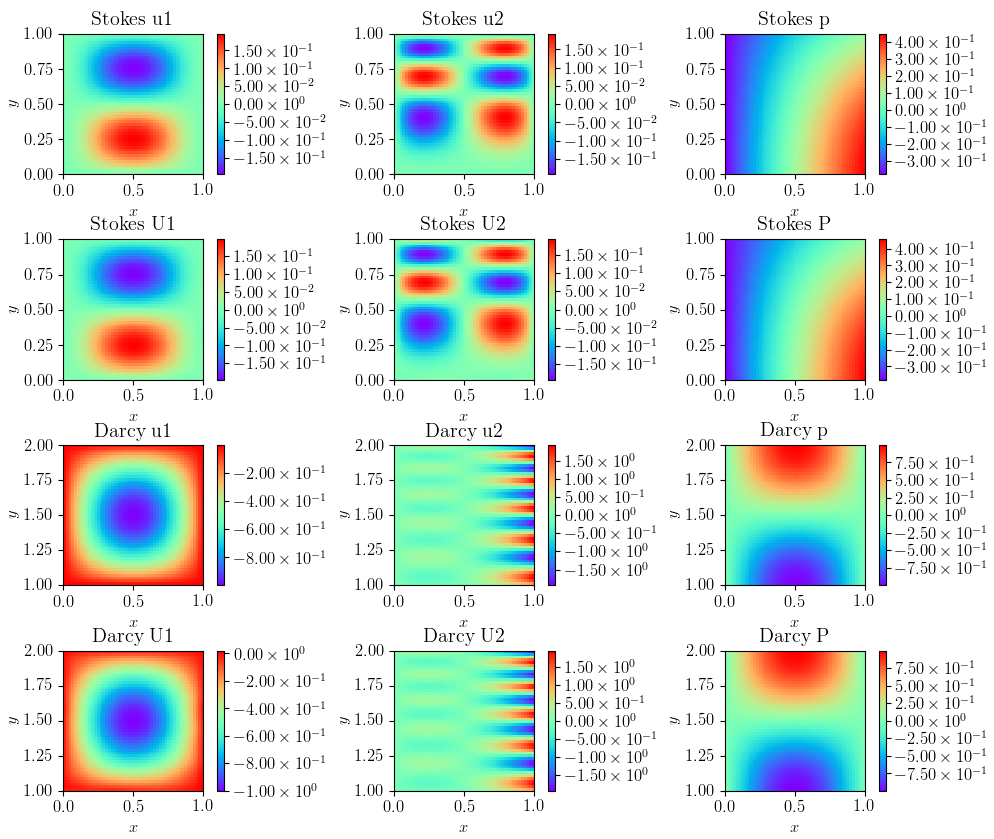}
\caption{The contrast of the exact solution and the CDNNs (Test 1).}
\label{figure4}
\end{figure}

\begin{figure}[H]
\centering
\includegraphics[width=6in]{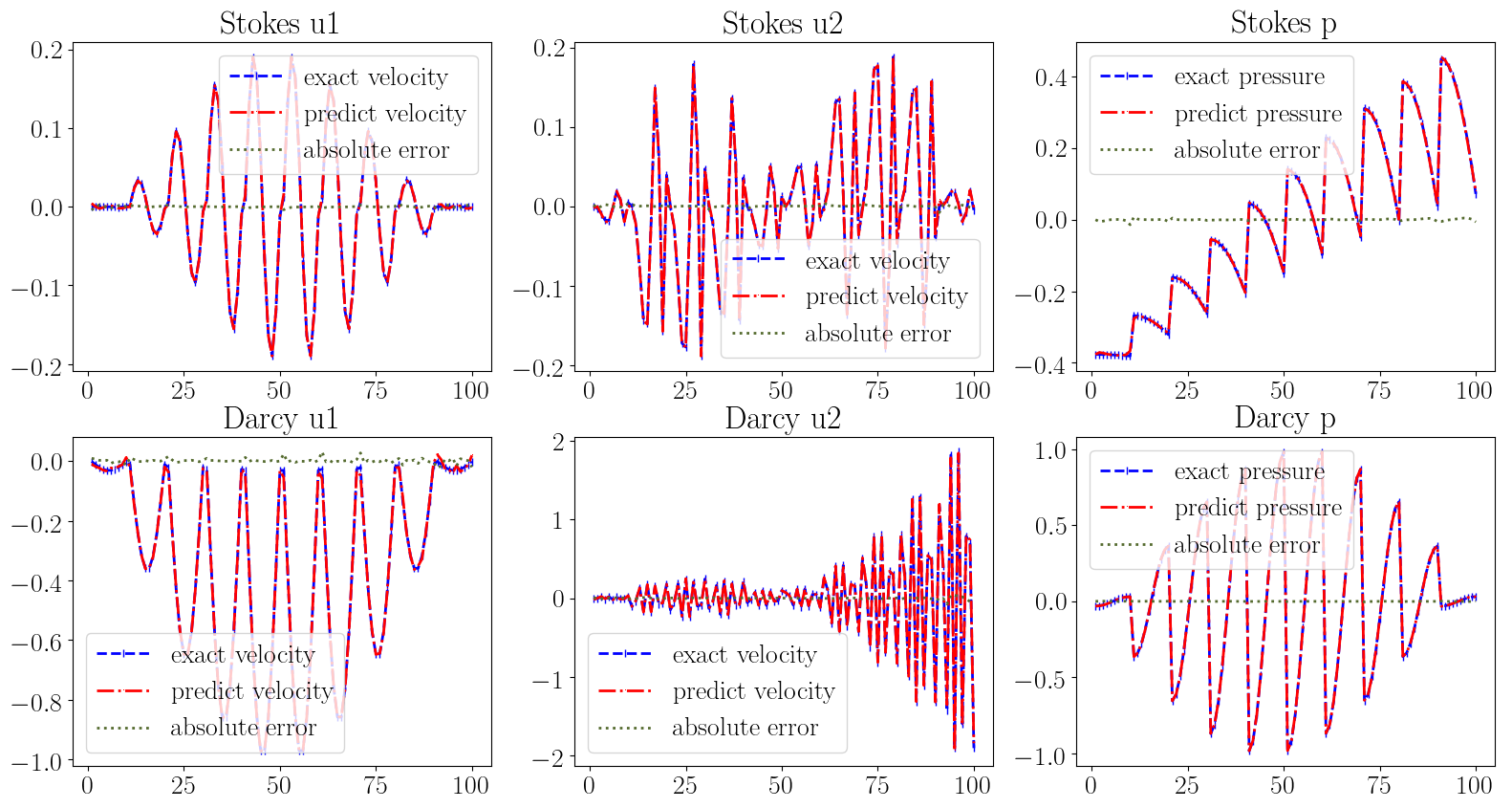}
\caption{ The point-wise errors (Test 1).}
\label{figure5}
\end{figure}

\subsection{Test 2}
In this example, we consider $\Omega_S = (0, 1)^2,~\Omega_D = (0, 1)\times (1, 2)$ and the interface $\Gamma  = \{ 0 < x < 1,~ y = 1\}$ with an analytical solution presented in \cite{Zhao}. We set $\mathbf{K}$ to be the identity tensor in $\mathbb{R}^{ 2\times 2},~\mu  = \rho  = \beta  = \nu  = 1$, and the exact solution is given by

$$\mathbf{u}_S = \left(
                   \begin{array}{c}
                     -cos^2(\frac{\pi y}{2})sin(\frac{\pi x}{2}) \\
                     \frac{1}{4}cos(\frac{\pi x}{2})(sin(\pi y)+\pi y) \\
                   \end{array}
                 \right)
,~~p_S =\frac{\pi }{4} cos(\frac{\pi x}{2})(y-2cos(\frac{\pi y}{2})^2) $$
and
$$\mathbf{u}_D = \left(
                   \begin{array}{c}
                     -\frac{1}{8}sin(\frac{\pi x}{2}) \\
                     \frac{1}{4}\pi cos(\frac{\pi x}{2}) \\
                   \end{array}
                 \right)
,~~p_D =-\frac{\pi }{4} cos(\frac{\pi x}{2})y. $$
\begin{table}[ht]
\centering
\caption{The relative errors of Test 2.}
\begin{tabular}{c@{\extracolsep{0.2em}}c@{\extracolsep{0.2em}}c@{\extracolsep{0.2em}}c@{\extracolsep{0.2em}}c@{\extracolsep{0.2em}}c}
\hline
   & &400 sampled points           \\ \hline
1 layer   &$\mathbf{U}_S$          &$P_S$             &$\mathbf{U}_D$             &$P_D$     \\ \hline
$errL^1$    \ \ \  &$1.82\times10^{-2}$   \ \ \ &$1.19\times10^{-1}$   \ \ \    &$1.57\times10^{-2}$    \ \ \    &$1.08\times10^{-2}$  \\
$errL^2$    \ \ \  &$3.66\times10^{-2}$  \ \ \ &$1.23\times10^{-1}$    \ \ \  &$4.62\times10^{-2}$      \ \ \    &$1.11\times10^{-2}$ \\\hline\hline
2 layers   &$\mathbf{U}_S$          &$P_S$             &$\mathbf{U}_D$             &$P_D$     \\ \hline
$errL^1$    \ \ \  &$2.27\times10^{-4}$   \ \ \ &$1.74\times10^{-3}$   \ \ \    &$1.04\times10^{-4}$     \ \ \    &$4.67\times10^{-5}$ \\
$errL^2$    \ \ \  &$4.21\times10^{-4}$  \ \ \ &$2.00\times10^{-3}$    \ \ \  &$3.18\times10^{-4}$      \ \ \    &$5.55\times10^{-5}$ \\\hline\hline
3 layers   &$\mathbf{U}_S$          &$P_S$             &$\mathbf{U}_D$             &$P_D$     \\ \hline
$errL^1$    \ \ \  &$1.65\times10^{-4}$   \ \ \ &$1.01\times10^{-3}$   \ \ \    &$1.13\times10^{-4}$     \ \ \    &$7.69\times10^{-5}$ \\
$errL^2$    \ \ \  &$3.37\times10^{-4}$  \ \ \ &$1.50\times10^{-3}$    \ \ \  &$3.44\times10^{-4}$     \ \ \    &$8.32\times10^{-5}$  \\\hline\hline\
\end{tabular}
\label{tabel1}
\end{table}

Naturally, the corresponding $\mathbf{f}_S,~f_D$, and $\mathbf{g}_D$ can be calculated by the exact solution. Note that this example satisfies the interface conditions (\ref{SDF-7}) - (\ref{SDF-9}). According to Test 1, we choose appropriate data and hidden layer to solve the second example. Figures \ref{figure1} - \ref{figure2} and Table \ref{tabel1} show the accuracy of the CDNNs for solving the coupled problems in detail.

\begin{figure}
\centering
\includegraphics[width=6in]{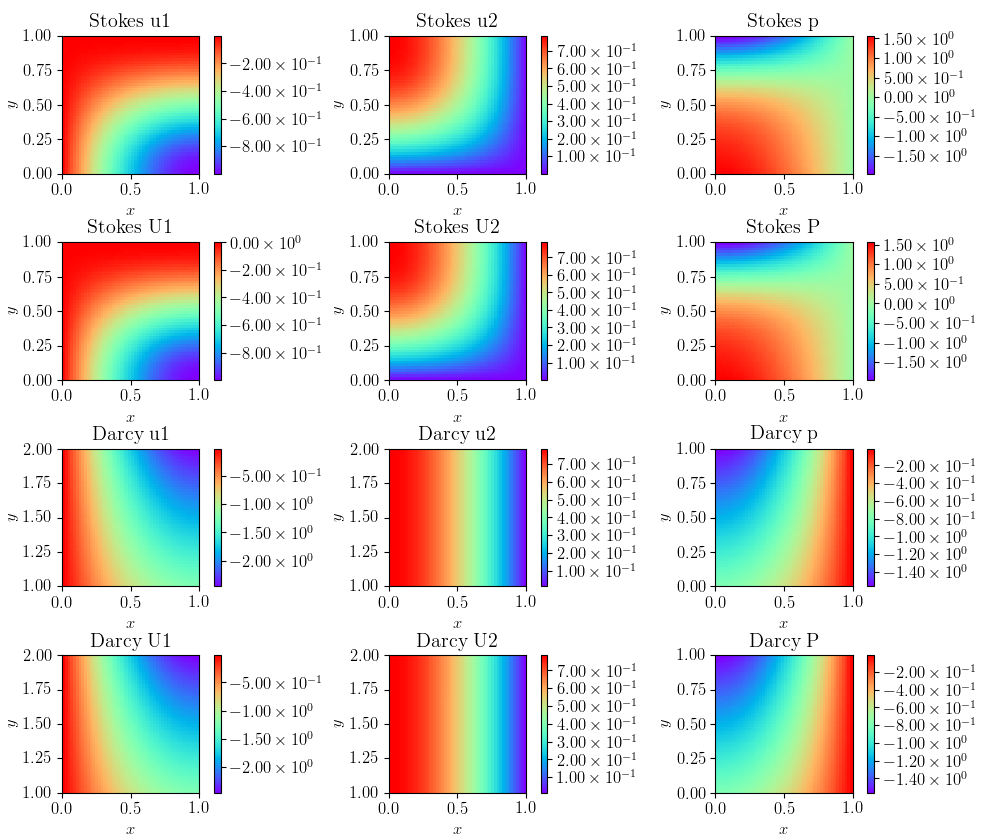}
\caption{The contrast of the exact solution and the results of the CDNNs (Test 2).}
\label{figure1}
\end{figure}

\begin{figure}[H]
\centering
\includegraphics[width=6in]{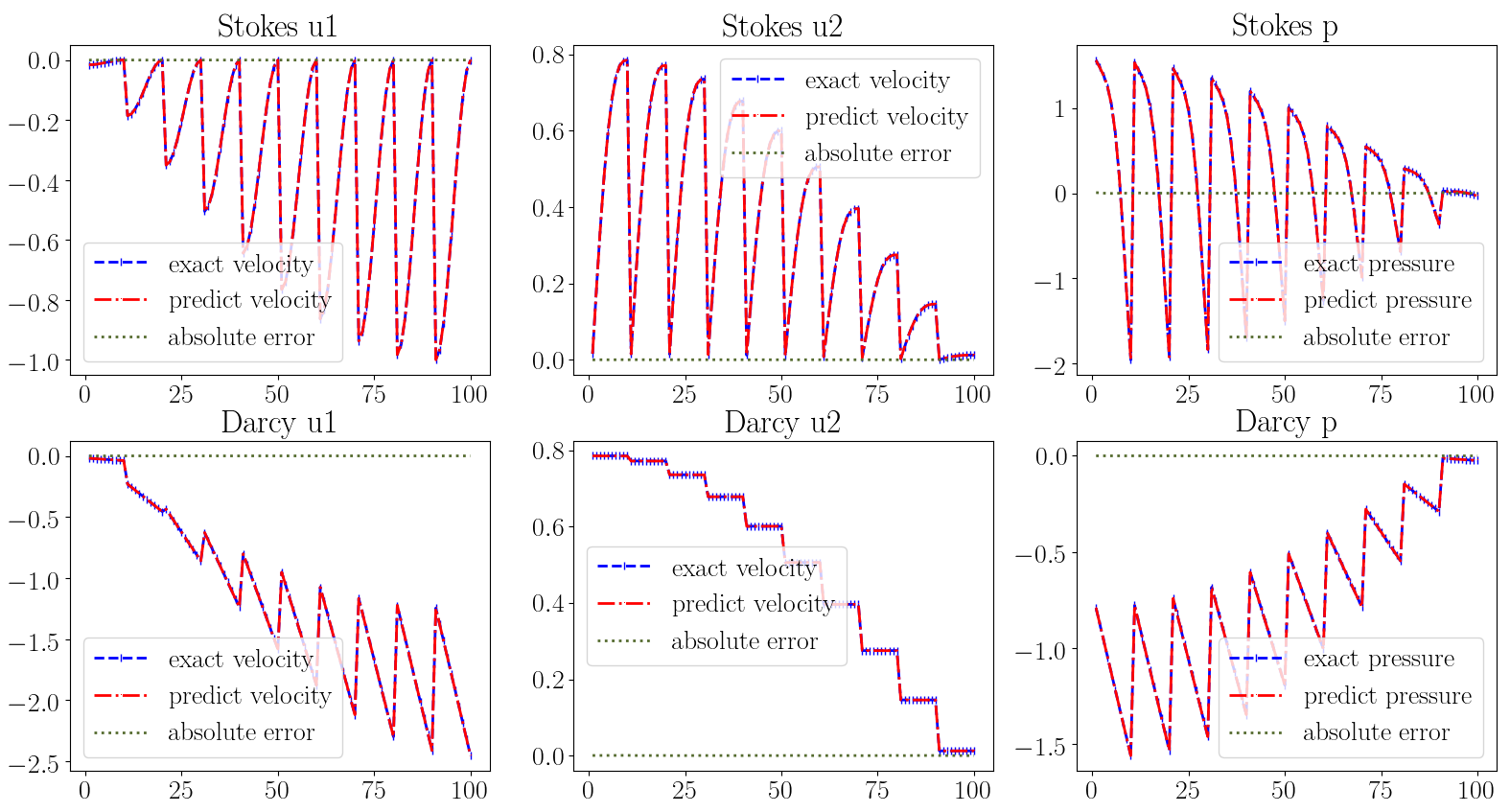}
\caption{ The point-wise errors (Test 2).}
\label{figure2}
\end{figure}

\subsection{Test 3}
In this subsection, we solve coupling of the Stokes and Darcy-Forchheimer problems with highly oscillatory permeability over domains $\Omega_S = (0, 1)\times (0, 1/2),~\Omega_D = (0, 1)\times (1/2, 1)$ and the interface $\Gamma  = \{ 0 < x < 1,~ y = 1/2\}$ presented in \cite{Zhao}. Here we set $\mu  = \rho  = \beta  = \nu  = 1$, $K^{-1} = \varrho I$ and $\varrho$  is defined by
$$ \varrho=\frac{2+1.8sin(2\pi x/\varepsilon)}{2+1.8sin(2\pi y/\varepsilon)}+\frac{2+1.8sin(2\pi y/\varepsilon)}{2+1.8sin(2\pi x/\varepsilon)},$$
where $\varepsilon  = 1/16$. The profile of $\varrho$  is shown in Figure \ref{figurehigh}. The exact solution is given by
$$\mathbf{u}_S = \left(
                   \begin{array}{c}
                    16ycos(\pi x)^2(y^2-0.25) \\
                    8\pi cos(\pi x)sin(\pi x)(y^2-0.25)^2 \\
                   \end{array}
                 \right)
,~~p_S =x^2$$
and
$$\mathbf{u}_D = \left(
                   \begin{array}{c}
                     sin(2\pi x)cos(2\pi y)\\
                    -cos(2\pi x)sin(2\pi y)\\
                   \end{array}
                 \right)
,~~p_D =cos(2\pi x)cos(2\pi y).$$

We calculate the relative errors in Table \ref{tabel3} to reflect ability of the CDNNs for solving the coupled problems with highly oscillatory permeability. Figures \ref{figure6} - \ref{figure7} reveal that the CDNNs handle the highly oscillatory permeability coupled problems without losing accuracy.

\begin{figure}[H]
\centering
\includegraphics[width=2in]{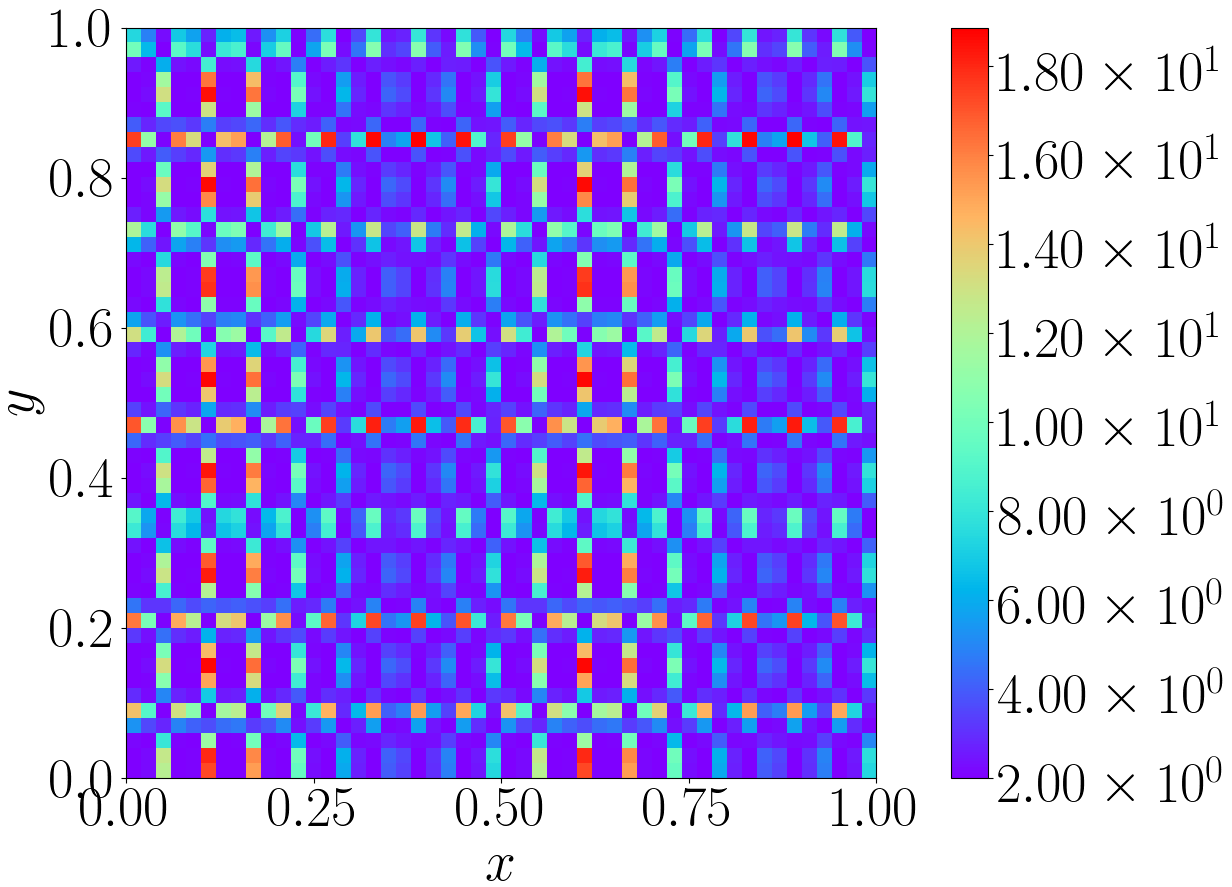}
\caption{\emph{The value of $\varrho$ (Test 3).}}
\label{figurehigh}
\end{figure}
\begin{table}[H]
\centering
\caption{The relative errors of Test 3.}
\begin{tabular}{c@{\extracolsep{0.2em}}c@{\extracolsep{0.2em}}c@{\extracolsep{0.2em}}c@{\extracolsep{0.2em}}c@{\extracolsep{0.2em}}c}
\hline
   & &400 sampled points            \\ \hline
1 layer   &$\mathbf{U}_S$          &$P_S$             &$\mathbf{U}_D$             &$P_D$     \\ \hline
$errL^1$   \ \ \  &$3.59\times10^{-1}$   \ \ \ &$5.03\times10^{0}$   \ \ \    &$5.52\times10^{-2}$    \ \ \    &$7.59\times10^{-2}$  \\
$errL^2$    \ \ \  &$6.79\times10^{-1}$  \ \ \ &$5.20\times10^{0}$    \ \ \  &$1.73\times10^{-1}$      \ \ \    &$7.07\times10^{-2}$ \\\hline\hline
2 layers   &$\mathbf{U}_S$          &$P_S$             &$\mathbf{U}_D$             &$P_D$     \\ \hline
$errL^1$   \ \ \  &$8.42\times10^{-4}$   \ \ \ &$9.41\times10^{-3}$   \ \ \    &$1.15\times10^{-3}$     \ \ \    &$1.32\times10^{-3}$ \\
$errL^2$    \ \ \  &$1.65\times10^{-3}$  \ \ \ &$1.05\times10^{-2}$    \ \ \  &$3.43\times10^{-3}$      \ \ \    &$1.56\times10^{-3}$ \\\hline\hline
3 layers   &$\mathbf{U}_S$          &$P_S$             &$\mathbf{U}_D$             &$P_D$     \\ \hline
$errL^1$    \ \ \  &$1.89\times10^{-4}$   \ \ \ &$3.04\times10^{-3}$   \ \ \    &$2.97\times10^{-4}$     \ \ \    &$6.70\times10^{-5}$ \\
$errL^2$    \ \ \  &$3.65\times10^{-4}$  \ \ \ &$3.37\times10^{-3}$    \ \ \  &$8.98\times10^{-4}$     \ \ \    &$8.40\times10^{-5}$  \\\hline\hline\
\end{tabular}
\label{tabel3}
\end{table}
\begin{figure}
\centering
\includegraphics[width=5.5in]{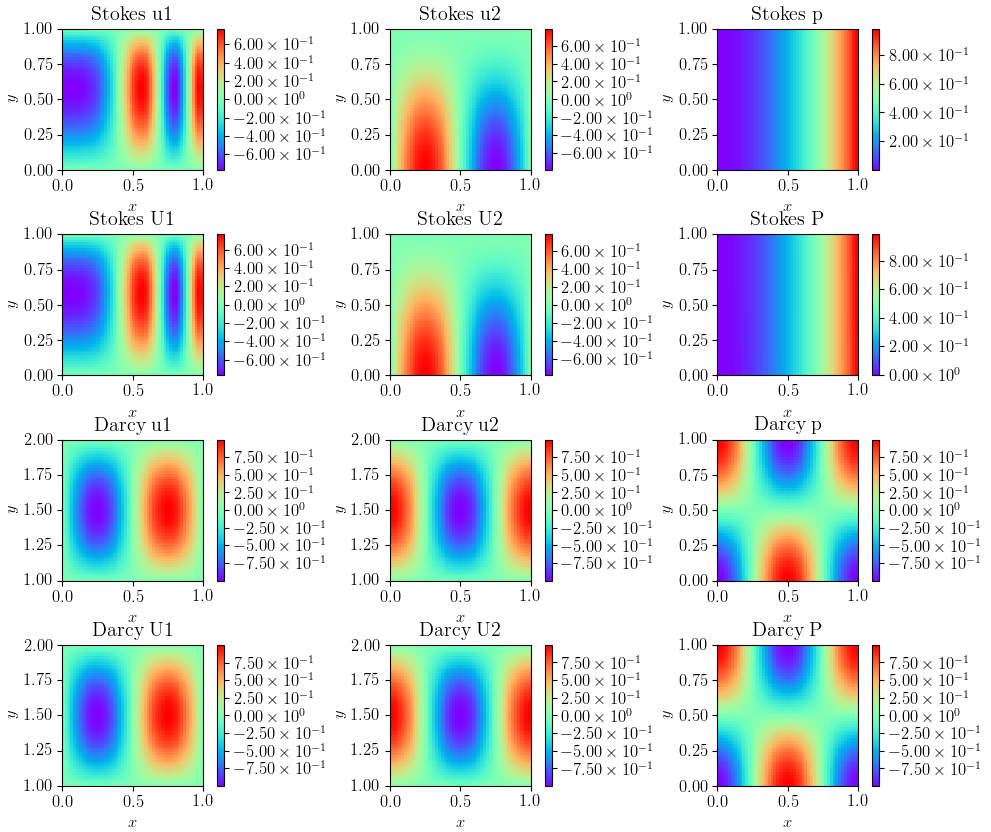}
\caption{The contrast of the exact solution and  the results of the CDNNs (Test 3).}
\label{figure6}
\end{figure}
\begin{figure}[H]
\centering
\includegraphics[width=6in]{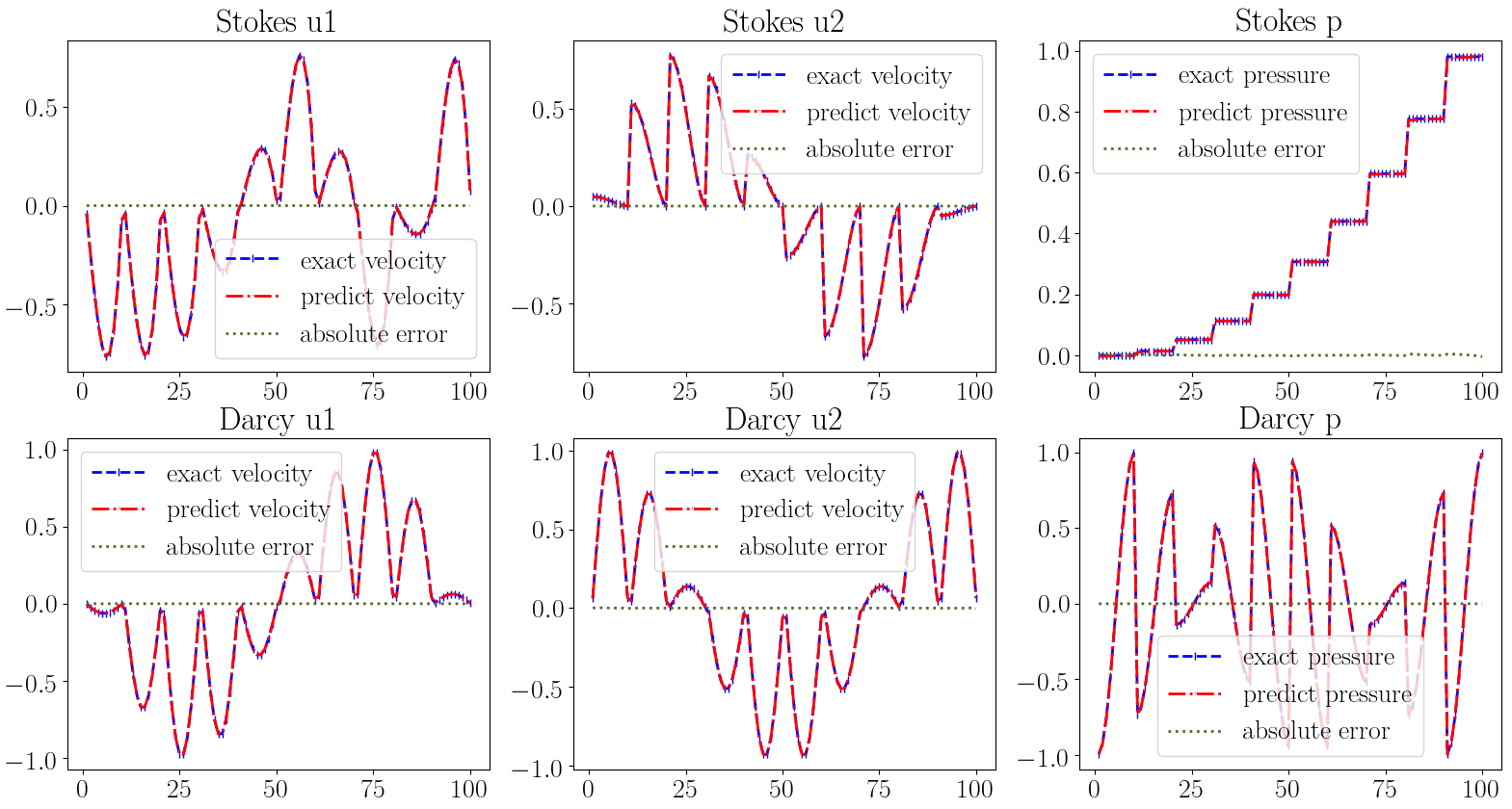}
\caption{ The point-wise errors (Test 3).}
\label{figure7}
\end{figure}

\subsection{Test 4}
The problems that we have studied so far have the exact solution. In this example, we consider coupling of the Stokes and Darcy-Forchheimer problems with no exact solution over $\Omega_S = (-1/2, 3/2)\times (0, 2),~\Omega_D = (-1/2, 3/2)\times (-2, 0)$ and the interface $\Gamma  = \{ -1/2 < x < 3/2,~y = 0\}$.  Specifically, in the Stokes region, the Dirichlet boundary condition is given by Kovasznay flow \cite{L. I. G. Kovasznay},
$$\mathbf{u}_S = \left(
                   \begin{array}{c}
                    1-e^{\lambda x}cos(2\pi y) \\
                    \frac{\lambda}{2\pi}e^{\lambda x}sin(2\pi y)\\
                   \end{array}
                 \right),
$$
where $\lambda  =\frac{- 8\pi^2}{ 1+\sqrt{1+64\pi ^2}}$. Moreover, we set $\mu  = \rho  = \beta  = \nu  = 1$ and $\mathbf{g}_D = \mathbf{0}, ~f_D = 0, ~\mathbf{f}_S = \mathbf{0}$. In addition, $p_D$ satisfies the homogeneous Dirichlet boundary condition along $y = -2$, otherwise it has an homogeneous Neumann boundary condition. The permeability is taken to be $K = \varepsilon I$ and the $\varepsilon=10^{4}$. Since the exact solution for this example is not available, we provide $L^2$ error of interface to demonstrate the accuracy of the CDNNs in Table \ref{table4}. Obviously, the error decreases gradually with the increasing of the hidden layers. Furthermore, Figures \ref{figure8} - \ref{figure9} display the exact solution and the results of CDNNs in detail.
\begin{table}[H]
\centering
\caption{The error in interface of Test 4 (K=10000).}
\begin{tabular}{c@{\extracolsep{0.2em}}c@{\extracolsep{0.2em}}c@{\extracolsep{0.2em}}c@{\extracolsep{0.2em}}c@{\extracolsep{0.2em}}c@{\extracolsep{0.2em}}c}
\hline
  & &400 sampled points        &     \\ \hline
   &$Interface1$         &$Interface2$              & $Interface3$       \\ \hline
$1~layer$    \ \ \  &$6.49\times10^{-2}$   \ \ \ &$9.14\times10^{-2}$   \ \ \     &$3.03\times10^{-2}$ \\\hline
$2~layers$    \ \ \  &$2.31\times10^{-2}$   \ \ \ &$4.74\times10^{-2}$   \ \ \   &$2.38\times10^{-3}$  \\\hline
$3~layers$\ \ \  &$2.12\times10^{-2}$   \ \ \ &$1.28\times10^{-2}$   \ \ \        &$4.94\times10^{-4}$  \\\hline\hline\
\end{tabular}
\label{table4}
\end{table}

\begin{figure}[H]
\centering
\includegraphics[width=6in]{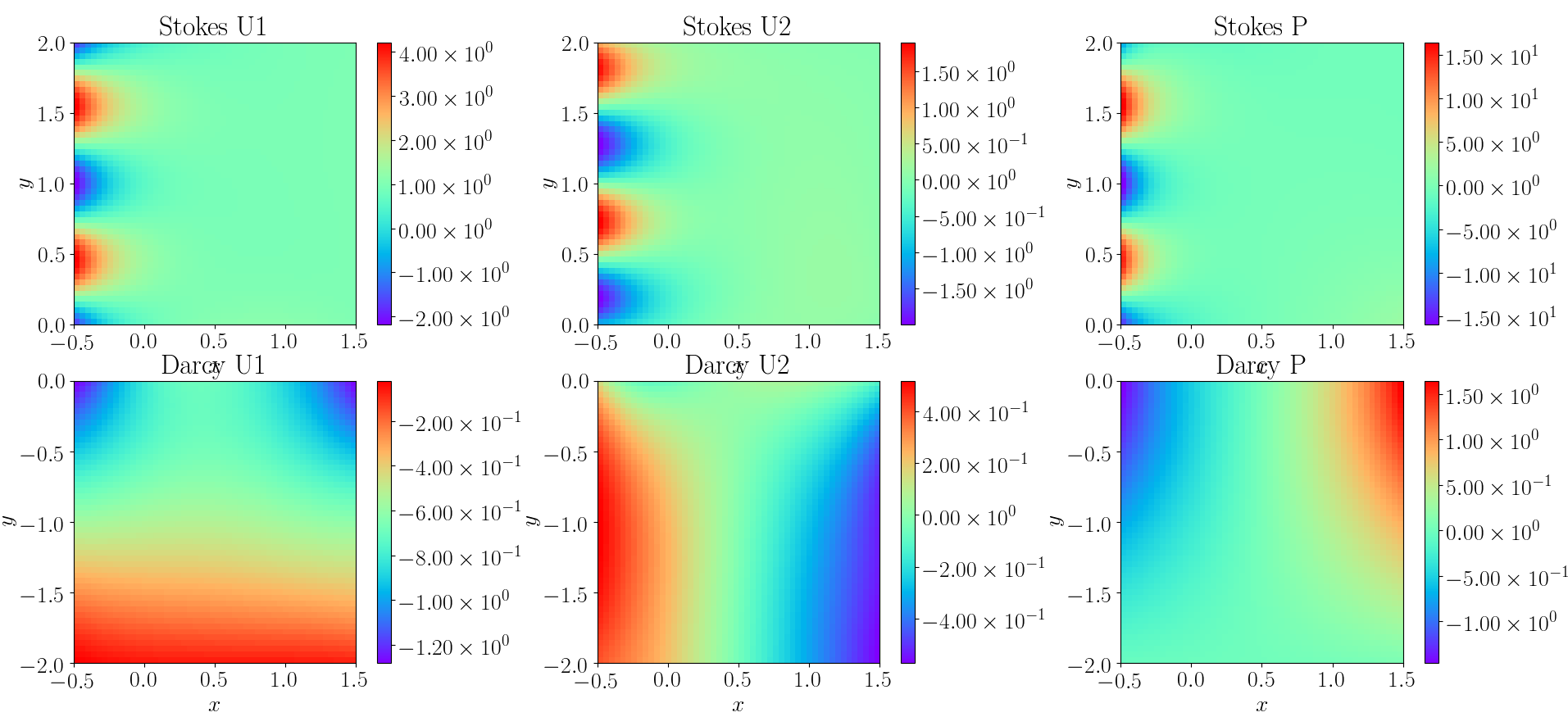}\\
\caption{The results of CDNNs (Test 4).}
\label{figure8}
\end{figure}

\begin{figure}[H]
\centering
\subfigure[Stokes velocity.]{
\begin{minipage}[t]{0.45\linewidth}
\centering
\includegraphics[width=2.1in]{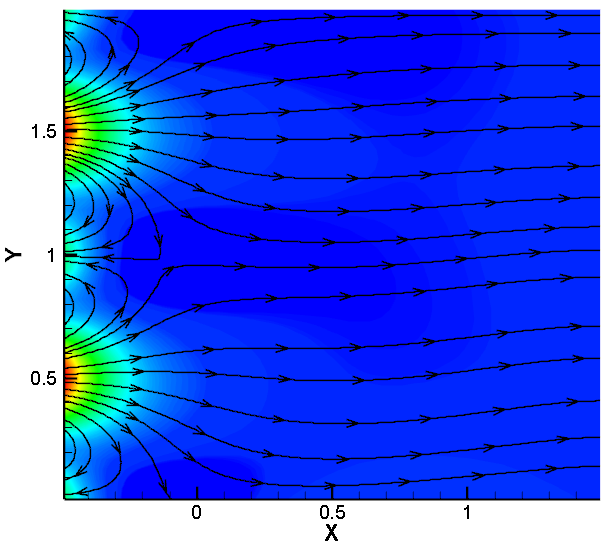}
\end{minipage}}
\subfigure[Darcy velocity.]{
\begin{minipage}[t]{0.45\linewidth}
\centering
\includegraphics[width=2.1in]{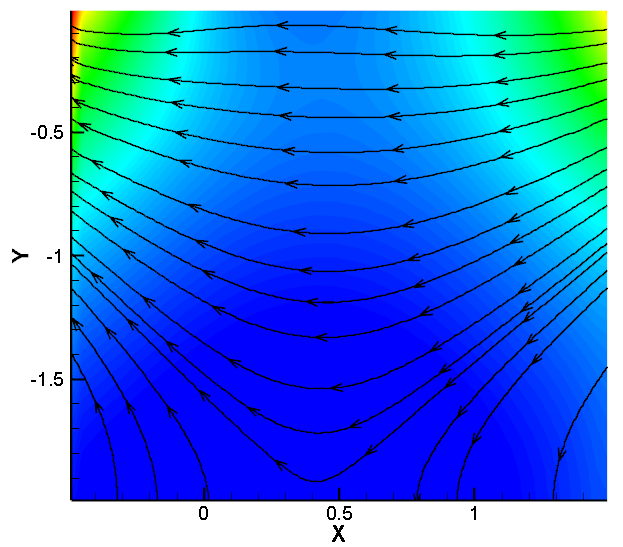}
\end{minipage}}
\caption{The velocity of Stokes and Darcy (Test 4).}
\label{figure9}
\end{figure}

\subsection{Test 5}
We conclude this section with a physical flow, where $\Omega _S = (0, 1) \times (1, 2),~\Omega_ D = (0, 1)^2$ and the interface $\Gamma  = \{ 0 < x < 1,~ y = 1\}$. In $\Omega _S$, the boundaries of the cavity are walls with no-slip condition, except for the upper boundary where a uniform tangential velocity $\mathbf{u}_S(x, 2) = (1, 0)^T$ is imposed, which is driven cavity flow.  And more precisely, we enforce homogeneous Neumann and Dirichlet boundary conditions, respectively, on $\Gamma_ {D,N} = \{ x = 0~ or ~y = 0\}$  and $\Gamma _{D,D} = \{ x = 1\}$. In addition, we set $K$ to be the identity tensor in $\mathbb{R} ^{2\times 2}, ~\mu  = \rho  = \beta  = \nu  = 1$, and $f_D=0,~\mathbf{f}_S =\mathbf{ 0},~\mathbf{g}_ D = \mathbf{0}$. The results of the CDNNs are depicted in Figure \ref{figure10}. More vividly, we display the velocity flows of  free-flow and porous media zones in Figure \ref{figure10}.
\begin{figure}[H]
\centering
\includegraphics[width=5.5in]{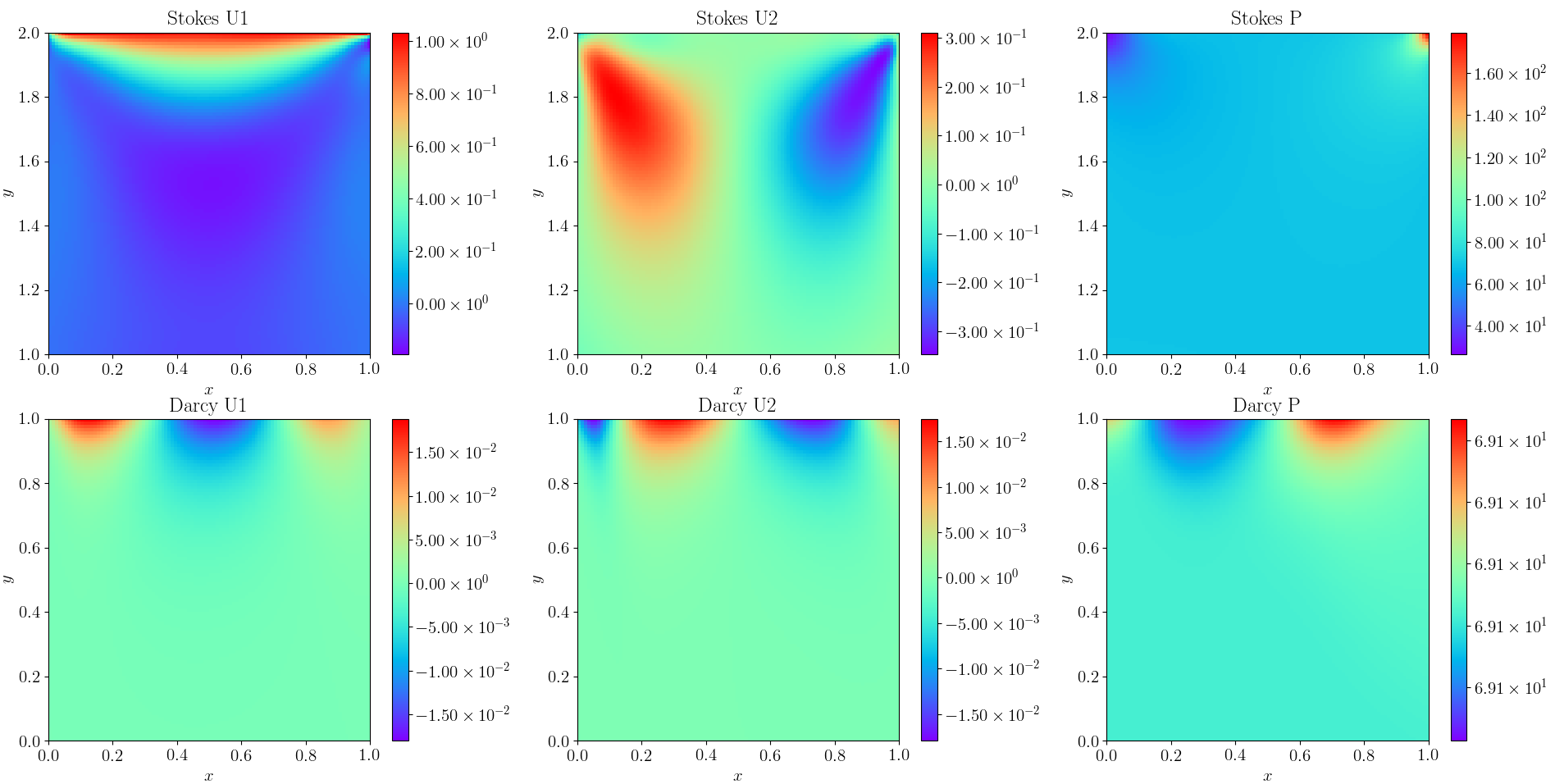}
\caption{The results of the CDNNs (Test 5).}
\label{figure10}
\end{figure}
\begin{figure}[H]
\centering
\subfigure[Stokes velocity.]{
\begin{minipage}[t]{0.45\linewidth}
\centering
\includegraphics[width=1.9in]{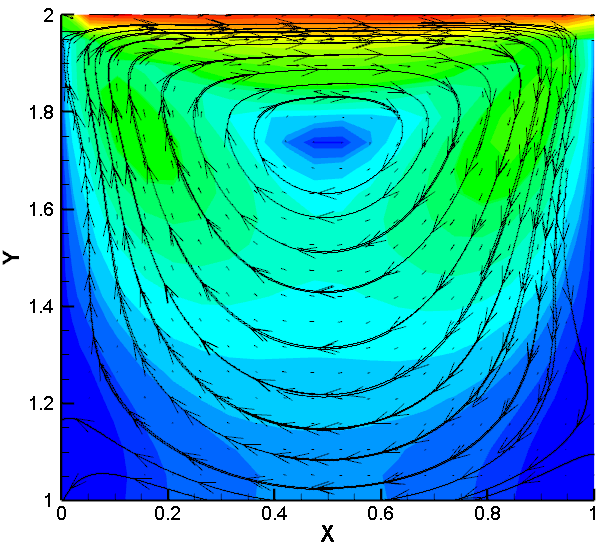}
\end{minipage}}
\subfigure[Darcy velocity.]{
\begin{minipage}[t]{0.45\linewidth}
\centering
\includegraphics[width=1.9in]{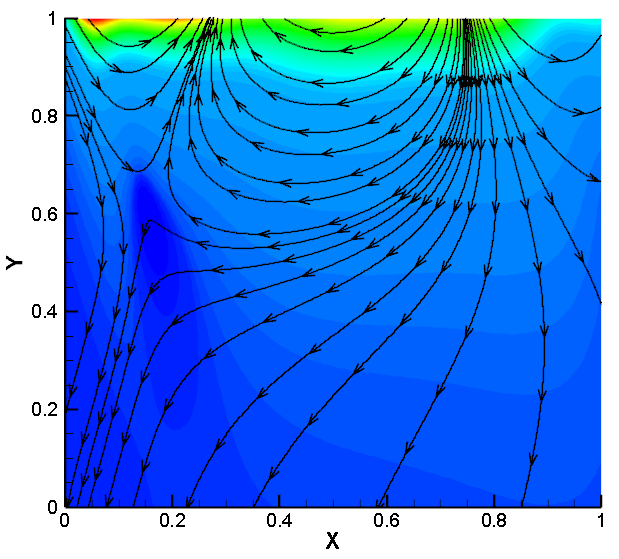}
\end{minipage}}
\caption{The velocity flows of Stokes and Darcy (Test 5).}
\label{figure11}
\end{figure}

\section{Conclusions}
In this article, we proposed the CDNNs to study the coupled Stokes and Darcy-Forchheimer problems. This method can avoid many limitations of the traditional methods, such as decoupling, grid construction and the complicated interface conditions. Specially, we provide the convergence of the loss function and the convergence of the CDNNs to the exact solution. The numerical results are consistent with our theory sufficiently. Moreover, we leave the following issues subject to our future works, 1) Combining data-driven with model-driven to solve the high dimensional coupled problems; 2) Considering the specific size of the networks through theoretical analysis; 3) Combining traditional numerical methods with deep learning to solve more complicated high dimensional coupled problems.

\end{document}